\newtheorem{theorem}{Theorem}[section]
\newtheorem{corollary}[theorem]{Corollary}
\newtheorem{remark}{Remark}[section]
\theoremstyle{plain}
\begin{document}

\begin{frontmatter}

\title{On the Robustness of a Divergence based Test of Simple Statistical Hypotheses}
\author{Abhik Ghosh\fnref{label2}}
\ead{abhianik@gmail.com}
\author{Ayanendranath Basu\fnref{label1}}
\ead{ayanbasu@isical.ac.in}
\address{Indian Statistical Institute, Kolkata, India}
\author{Leandro Pardo}
\address{Complutense University, Madrid, Spain}
\ead{lpardo@mat.ucm.es}
\fntext[label2]{This is part of the Ph.D. research work of the first author
which is ongoing at the Indian Statistical Institute}
\fntext[label1]{Corresponding Author: Interdisciplinary Statistical Research Unit,\\
Indian Statistical Institute, \\
203 B. T. Road, Kolkata 700 108, India.\\
Phone: +91 33 2575 2806, 
Fax : +91 33 2577 3104.}

\begin{abstract}
The most popular hypothesis testing procedure, the likelihood ratio test, is known to be highly non-robust
in many real situations. \cite{Basu/etc:2013a} provided an alternative robust procedure of hypothesis testing 
based on the density power divergence;
however, although the robustness properties of the latter test were intuitively argued for by the authors together 
with extensive empirical substantiation of the same, 
no theoretical robustness properties were presented in that work.
In the present paper we will consider a more general class of tests which forms a superfamily of the procedures 
described by \cite{Basu/etc:2013a}. This superfamily derives from the class of $S$-divergences 
recently proposed by \cite{Ghosh/etc:2013a}. In this context we theoretically prove several 
robustness results of the new class of tests and illustrate them in the normal model.
All the theoretical robustness properties of the \cite{Basu/etc:2013a} proposal follows as special cases 
of our results.
\end{abstract}

\begin{keyword}
Hypothesis Testing \sep Robustness \sep $S$-Divergence.

\end{keyword}

\end{frontmatter}


\section{Introduction}

Hypothesis testing is a very important component of statistical inference; 
it helps us to systematically explore the veracity of an unsubstantiated claim on the basis of observed 
data in a real life experiment. 
The philosophy of the statistical hypothesis testing was mainly developed in the early decades 
of twentieth Century by Fisher and Neyman and Pearson  
\citep{Fisher:1925b,Fisher:1935,Neyman/Pearson:1928, Neyman/Pearson:1933a, Neyman/Pearson:1933b}; 
since then the theory has evolved in many directions through the contributions of several later researchers. 
Yet the classical likelihood ratio test (LRT) proposed by \cite{Neyman/Pearson:1928}
and formalized later by \cite{Wilks:1938} still remains our canonical hypothesis testing tool, 
which is used widely by the practitioners in all 
scenarios of human endeavor. This test can be easily performed for most statistical models 
and satisfies several asymptotic optimality criteria. 
However, as in the case of maximum likelihood estimator (MLE) in the estimation context, 
the LRT also has serious robustness problems under misspecification of models and/or presence of outliers.

There have been several attempts to develop robust tests of hypotheses
having optimal properties similar to the LRT. 
The LRT can be seen to be a special case of a large class of testing procedures, 
known as the ``disparity difference tests" which have strong robustness properties \cite[e.g.~][]{Basu/etc:2011}. 
However, these tests involve kernel density estimators of the true density 
for continuous models and hence include all the complications associated with kernel smoothing. 
Motivated by the success of the minimum density power divergence estimators, 
which require no kernel smoothing, as an alternative to the class of minimum disparity estimators,
\cite{Basu/etc:2013a} developed a class of tests using the density power divergence. 
This work was based on \cite{Basu/etc:1998}, which had developed the density power divergence measure,
as well as the minimum density power divergence estimator (MDPDE).
Consider the problem of testing a simple null hypothesis based on the random sample $X_1, \ldots, X_n$. 
Let  $\mathcal{F} = \{f_\theta : \theta \in \Theta \subset \mathbb{R}^p\}$ represent the 
parametric family of densities; suppose also that the true data generating density belongs to this model family. 
Let $\theta_0$ be any fixed point in the parameter space $\Theta$, 
which we believe to be the true value of the parameter.  
Based on the observed sample, we want to test for the simple hypotheses 
\begin{equation}
 H_0 : \theta = \theta_0 ~~~ \mbox{against} ~~~~ H_1 : \theta \ne \theta_0.
 \label{EQ:7simple_hyp}
\end{equation}
Note that in general there is no uniformly most powerful test for this testing problem. 
\cite{Basu/etc:2013a} proposed to test this hypothesis
by utilizing the minimum possible value of the density power divergence (DPD) measure between 
the data and a model density. Although they have empirically demonstrated some of strong robustness properties 
of the DPD based test, their paper had no concrete theoretical results on the robustness of the proposed tests.

The present paper will focus on developing the theoretical robustness properties of the DPD based test. 
However, instead of doing it simply for the DPD alone, we will prove all our results for a more general class of 
test statistics that contains the DPD based tests as a special case. 
This general class of test statistics will be based on the recently developed  family of $S$-divergences 
\citep{Ghosh/etc:2013a} that contains both the PD (power divergence; Cressie and Read, 1984) and 
the DPD measure as its subfamilies. 
For all the asymptotic results throughout the paper 
we need to assume some standard conditions of asymptotic inference, 
given by Assumptions A, B, C and D of \citet[][p. 429]{Lehmann:1983}. 
In the rest of this paper, we will refer to these conditions simply as 
the ``{\it Lehmann conditions}". 	
Similarly, we will also assume the conditions D1--D5 of \citet[][p.~304]{Basu/etc:2011} at the model, 
which we will refer to as the ``{\it Basu et al.~conditions}".
Both set of conditions, with a brief description of their implications and significances 
are provided in \ref{App:conditions}.	
There is some overlap among the conditions, but taken together they exhaust the necessary technicalities.

To keep a clear focus in our discussion and to relate them directly to the empirical findings of 
\cite{Basu/etc:2013a}, we will restrict ourselves to a simple null hypothesis in this paper.
However, at the end of the paper, we will briefly indicate how the results can be extended
to the case of composite null hypotheses.

The rest of the paper is organized as follows: In Section \ref{SEC:7sample1_simpleTest}
we define the proposed test statistic based on the general family of $S$-divergence measures 
and provide its asymptotic properties. Several robustness measures of this test are derived in 
Section \ref{SEC:7sample1_simpleTest_robust}. Section \ref{SEC:7example_simple} presents a
numerical illustration of all the theoretical results derived in this paper through the problem 
of testing for the normal mean with a known variance.
In Section \ref{SEC:choice_tuning} we present some general remarks
integrating the theoretical results and numerical findings presented in this paper;
in this section we also  give some guidance on the choice of appropriate tuning parameter for the proposed test.
Section \ref{SEC:composite} briefly indicates the possible generalization 
to the case of the composite null hypotheses.
Some concluding remarks are presented in Section \ref{SEC:conclusion}.

\section{General Test Statistics based on the $S$-Divergence}
\label{SEC:7sample1_simpleTest}
  
The $S$-divergence measure, recently proposed by \cite{Ghosh/etc:2013a}, 
is a general family of divergence measures (between two density functions) 
containing several popular divergences including
the power divergence (PD) family of Cressie and Read (1984) and 
the density power divergence (DPD) family of \cite{Basu/etc:2013a}. 
The $S$-divergence between the densities $g$ and $f$ is defined 
in terms of two parameters $\gamma \in [0, 1]$ and $\lambda \in \mathbb{R}$ as
\begin{equation}
S_{(\gamma, \lambda)}(g,f) =  \frac{1}{A} ~ \int ~ f^{1+\gamma}  -   \frac{1+\gamma}{A B} ~ 
\int ~~ f^{B} g^{A}  + \frac{1}{B} ~ \int ~~ g^{1+\gamma}, ~~~~~~ A \ne 0,~B \ne 0,
\label{EQ:S_div_gen}
\end{equation}
where $A = 1+\lambda (1-\gamma)$ and  $B = \gamma - \lambda (1-\gamma)$.
For the cases $A=0$ or $B=0$, the corresponding $S$-divergence measure is defined by 
the respective continuous limits of the expression in Equation (\ref{EQ:S_div_gen}), which yield 
\begin{equation}
S_{(\gamma,\lambda : A = 0)}(g,f) = \lim_{A \rightarrow 0} ~ S_{(\gamma, \lambda)}(g,f) 
=  \int f^{1+\gamma} \log\left(\frac{f}{g}\right) 
- \int \frac{(f^{1+\gamma} - g^{1+\gamma})}{{1+\gamma}},
\end{equation}
and
\begin{equation}
S_{(\gamma,\lambda : B = 0)}(g,f) = \lim_{B \rightarrow 0} ~ S_{(\gamma, \lambda)}(g,f) 
=  \int g^{1+\gamma} \log\left(\frac{g}{f}\right) 
- \int \frac{(g^{1+\gamma} - f^{1+\gamma})}{{1+\gamma}}.
\end{equation}
Interestingly, note that at $\gamma=1$ the $S$-divergence measure becomes independent of 
the parameter $\lambda$ and coincides with the squared $L_2$-distance. On the other hand, 
at $\gamma=0$ it reduces to the PD family of Cressie and Read (1984) with parameter $\lambda$.
Further, note that the $S$-divergence family also contains 
the DPD measure with parameter $\gamma = \beta$,
given by  
\begin{equation}
d_{\beta}(g,f) = \left\{\begin{array}{l c l}
\int  f_\theta^{1+\beta} - \frac{1+\beta}{\beta} \int f_\theta^\beta g 
+ \frac{1}{\beta} \int g^{1+\beta},  & \mbox{ if } & \beta > 0, \\\\
\int g \log(g/f_\theta), & \mbox{ if } & \beta = 0,
\end{array}\right.
\end{equation}
as one of its special cases when $\lambda=0$.
Therefore, it is natural to reconstruct the DPD based test statistics
proposed by \cite{Basu/etc:2013a} using this general family of $S$-divergences.

Consider the problem of testing the simple null hypothesis as described in Equation (\ref{EQ:7simple_hyp}) 
under the parametric set-up of Section 1. 
We define the general test statistic based on the $S$-divergence with parameter $\gamma$ and $\lambda$ as 
\begin{eqnarray}
\xi_n^{\gamma,\lambda}({\hat{\theta}_\beta}, {\theta_0}) 
= 2 n S_{(\gamma,\lambda)}(f_{\hat{\theta}_\beta}, f_{\theta_0}),
\label{EQ:SDT}
\end{eqnarray}
where $\hat{\theta}_\beta$ is the minimum DPD estimator (MDPDE) of $\theta$ obtained by minimizing the DPD 
with tuning parameter $\beta$.
The intuitive rationale behind construction of this test statistics is as follows: 
when the model is correctly specified and the assumed null hypothesis $H_0$ is correct, 
$f_{\theta_0}$ is the true data generating density and so it can be tested by considering the magnitude of the   
$S$-divergence  measure between $f_{\theta_0}$ and $f_{\hat{\theta}}$ for any consistent estimator $\hat{\theta}$ 
of $\theta$ based on the observed sample. 
Since the divergence employed in the definition of the statistic in Equation (\ref{EQ:SDT}) 
is the $S$-divergence, the ideal choice for $\hat{\theta}$ should be the minimum $S$-divergence estimator. 
The choice of $\hat{\theta}_\beta$, the MDPDE of $\theta$ corresponding to tuning parameter $\beta$, 
is preferred by us since the only subfamily in the $S$-divergence family 
that does not require the use of kernel density estimator is the DPD family.
Note that, by putting $\lambda=0$ this general test statistic coincides with the DPD based 
test statistic of \cite{Basu/etc:2013a}, whereas with the choice $\lambda =\gamma=\beta = 0$ it 
becomes equivalent to the LRT.

In the rest of the paper we refer to the test statistic $\xi_n^{\gamma,\lambda}({\hat{\theta}_\beta}, {\theta_0}) $ 
in the singular, although in effect we are taking a family of test statistics that vary over the tuning parameters.
We will prove our results for a generic statistic, where the asymptotic distribution is a function
 of $\gamma$, $\lambda$ and $\beta$.

We first prove some asymptotic properties of this general $S$-divergence based test 
(which we will refer to as the SDT) 
to obtain the critical points and power approximation of the test. 
For these, we require that the minimum DPD estimator used in constructing 
the test statistics is $n^{1/2}$-consistent and asymptotically normal, 
which hold under the Basu et al.~conditions \citep{Basu/etc:2011}.

Then, the asymptotic variance-covariance matrix of the MDPDE $\hat{\theta}_\beta$ is 
$$\Sigma_\beta(\theta) = J_\beta(\theta)^{-1} V_\beta(\theta) J_\beta(\theta)^{-1},$$ 
where
$$J_\beta(\theta) = \int u_\theta u_\theta^T f_\theta^{1+\beta},$$
and 
$$V_\beta(\theta) = \int u_\theta u_\theta^T f_\theta^{1+2\beta} 
- \left(\int u_\theta f_\theta^{1+\beta}\right)\left(\int u_\theta f_\theta^{1+\beta}\right)^T.$$ 
We can now derive the asymptotic null distribution of the SDT as given in the following theorem.

\bigskip
\begin{theorem}
	Suppose the model density satisfies the Lehmann and Basu et al.~conditions. 
The asymptotic distribution of the test statistic 	
$\xi_n^{\gamma,\lambda}({\hat{\theta}_\beta}, {\theta_0})$, under the null hypothesis 
$H_0 : \theta = \theta_0$, coincides with the distribution of 
$\sum_{i=1}^r ~  \zeta_i^{\gamma, \beta}(\theta_0)Z_i^2,$
where $Z_1, \ldots,Z_r$ are independent standard normal variables, 
$\zeta_1^{\gamma, \beta}(\theta_0), \ldots, \zeta_r^{\gamma, \beta}(\theta_0)$ 
are the nonzero eigenvalues of $A_\gamma(\theta_0)\Sigma_\beta(\theta_0)$
with 
$$
A_\gamma(\theta_0) = \nabla^2 S_{(\gamma,\lambda)}(f_\theta, f_{\theta_0})|_{\theta = \theta_0} =
 \left( (1+\gamma) \int f_{\theta_0}^{\gamma -1} \frac{\partial f_{\theta_0}}{\partial \theta_i} 
\frac{\partial f_{\theta_0}}{\partial \theta_j} \right)_{i,j=1,\cdots,p}
$$
and
$r = rank(V_\beta(\theta_0)J_\beta^{-1}(\theta_0)A_\gamma(\theta_0)J_\beta^{-1}(\theta_0)V_\beta(\theta_0)).$
\label{THM:7asymp_null_one}
\end{theorem}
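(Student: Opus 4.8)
The plan is to reduce the test statistic to a quadratic form in the estimation error $\sqrt{n}(\hat{\theta}_\beta - \theta_0)$ via a second-order Taylor expansion, and then combine the asymptotic normality of the MDPDE with the spectral representation of a Gaussian quadratic form. First I would expand the map $\theta \mapsto S_{(\gamma,\lambda)}(f_\theta, f_{\theta_0})$ about $\theta=\theta_0$. Two facts make the expansion collapse to a pure quadratic term. Since the divergence vanishes on the diagonal, $S_{(\gamma,\lambda)}(f_{\theta_0}, f_{\theta_0}) = 0$. Moreover, differentiating the defining integral once under the integral sign and using $A+B = 1+\gamma$, the first-order term cancels identically, so $\nabla_\theta S_{(\gamma,\lambda)}(f_\theta, f_{\theta_0})|_{\theta=\theta_0} = 0$. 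Differentiating a second time and again setting $\theta=\theta_0$, the terms carrying $\partial^2 f_{\theta_0}/\partial\theta_i\partial\theta_j$ cancel and the surviving coefficient simplifies, through $\gamma - (A-1) = B$, to the stated Hessian $A_\gamma(\theta_0)$; notably the dependence on $\lambda$ disappears at second order. Hence
\[
2n\, S_{(\gamma,\lambda)}(f_{\hat{\theta}_\beta}, f_{\theta_0}) = \left[\sqrt{n}(\hat{\theta}_\beta - \theta_0)\right]^T A_\gamma(\theta_0) \left[\sqrt{n}(\hat{\theta}_\beta - \theta_0)\right] + R_n,
\]
with $R_n$ the scaled Taylor remainder.

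Next I would control the remainder and pass to the limit. Under the Basu et al.\ conditions the MDPDE is $\sqrt{n}$-consistent and $\sqrt{n}(\hat{\theta}_\beta - \theta_0) \xrightarrow{d} N_p(0, \Sigma_\beta(\theta_0))$ under $H_0$; together with the local boundedness of the third derivatives this forces $R_n = o_p(1)$. An application of the continuous mapping theorem and Slutsky's lemma then yields $\xi_n^{\gamma,\lambda}(\hat{\theta}_\beta, \theta_0) \xrightarrow{d} Z^T A_\gamma(\theta_0) Z$, where $Z \sim N_p(0, \Sigma_\beta(\theta_0))$.

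Finally I would identify this quadratic form. Writing $Z = \Sigma_\beta(\theta_0)^{1/2} W$ with $W \sim N_p(0, I_p)$ and diagonalizing the symmetric matrix $\Sigma_\beta(\theta_0)^{1/2} A_\gamma(\theta_0) \Sigma_\beta(\theta_0)^{1/2}$ gives $Z^T A_\gamma(\theta_0) Z \stackrel{d}{=} \sum_{i=1}^{r} \zeta_i^{\gamma,\beta}(\theta_0) Z_i^2$ with $Z_i$ i.i.d.\ standard normal and $\zeta_i^{\gamma,\beta}(\theta_0)$ the nonzero eigenvalues of that matrix, which share their nonzero spectrum with $A_\gamma(\theta_0)\Sigma_\beta(\theta_0)$. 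To match the stated value of $r$, I would use $\Sigma_\beta = J_\beta^{-1} V_\beta J_\beta^{-1}$ and a rank identity: setting $Q = A_\gamma^{1/2} J_\beta^{-1} V_\beta^{1/2}$ one has $A_\gamma^{1/2}\Sigma_\beta A_\gamma^{1/2} = Q Q^T$ and $V_\beta J_\beta^{-1} A_\gamma J_\beta^{-1} V_\beta = (Q V_\beta^{1/2})^T (Q V_\beta^{1/2})$. Since $\mathrm{Range}(V_\beta^{1/2}) = \mathrm{Range}(V_\beta)$ for the positive semidefinite $V_\beta$, the two products have equal rank, so $r = \mathrm{rank}(V_\beta J_\beta^{-1} A_\gamma J_\beta^{-1} V_\beta)$ equals the number of nonzero $\zeta_i^{\gamma,\beta}(\theta_0)$, as claimed.

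The hard part will be the rigorous justification of the second-order expansion: interchanging differentiation and integration to obtain the vanishing gradient and the closed-form, $\lambda$-free Hessian, and establishing that the remainder is genuinely $o_p(1)$ after multiplication by $n$. This is precisely where the full strength of the Lehmann and Basu et al.\ regularity conditions is required; the subsequent normal-limit and quadratic-form steps are then routine.
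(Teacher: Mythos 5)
Your proposal is correct and takes essentially the same approach the paper relies on: the paper omits this proof as a ``routine extension'' of the DPD-test argument in \cite{Basu/etc:2013a}, and that argument---second-order Taylor expansion with vanishing gradient $M_{\gamma,\lambda}(\theta_0)=0$ and $\lambda$-free Hessian $A_\gamma(\theta_0)$, asymptotic normality of the MDPDE, then spectral reduction of the Gaussian quadratic form---is exactly what you wrote, and it coincides with the paper's own proof of Theorem~\ref{THM:7asymp_power_one} specialized to $\Delta=\epsilon=0$ (Corollary~\ref{COR:7cont_power_null}). Your explicit rank identity justifying the stated value of $r$ is a correct filling-in of a detail the paper leaves implicit.
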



The proof of the above theorem is a routine extension of the proof of 
the DPD based test provided by \cite{Basu/etc:2013a} and is omitted. 
Note that the minimum $S$-divergence estimator corresponding to parameter $\beta$ and $\lambda$ 
is first order equivalent to the minimum DPD estimator with parameter $\beta$ 
in the sense that they have the same asymptotic distribution \citep{Ghosh/etc:2013a}. 
So, if one were to replace the minimum DPD estimator $\hat{\theta}_\beta$ in (\ref{EQ:SDT}) 
with the minimum $S$-divergence estimator corresponding to $\beta$ and $\lambda$, 
one would still get the same asymptotic null distribution as in Theorem \ref{THM:7asymp_null_one};
this null distribution is independent of $\lambda$. 
This allows us to study the class of tests in (\ref{EQ:SDT}) 
in a more general setting than what is observed at face value. 
We could have made the set up fully general by actually substituting $\hat{\theta}_\beta$ 
with the minimum $S$-divergence estimator at $\beta$ and $\lambda$ in Equation (\ref{EQ:SDT}). 
But the general $S$-divergence estimator involves the construction of 
a non-parametric density estimator and inherits its associated problems, 
and so the value addition due to the generality might be offset by its cost. 
Instead we stick to the formulation in (\ref{EQ:SDT}), 
which gives us the same asymptotic distribution as the most general case, 
but also gives us a robust set of procedures without getting into 
the issue of non-parametric kernel density estimation. 
It also allows us to study the theoretical properties of the tests of \cite{Basu/etc:2013a} 
as a special case of our formulation.
One could also find the asymptotic critical values of the test in (\ref{EQ:SDT}) 
on the basis of the suggestion in Remark 3 of \cite{Basu/etc:2013a}.

Our next theorem presents a power approximation of the above SDT, 
which can help us to obtain the minimum sample size required in order to achieve 
some pre-specified power of the test. 
\begin{theorem}
\label{THM:7aprox_power_one}
	Suppose the model density satisfies the Lehmann and Basu et al.~conditions. 
An approximation to the power function of the test statistic 
$\xi_n^{\gamma,\lambda}({\hat{\theta}_\beta}, {\theta_0})$ for testing 
the null hypothesis in Equation (\ref{EQ:7simple_hyp}) at the significance level $\alpha$ is given by
\begin{eqnarray}
\pi_{n,\alpha}^{\beta,\gamma,\lambda} (\theta^*) 
= 1 - \Phi_n \left( \frac{\sqrt{n}}{\sigma_{\beta,\gamma, \lambda}(\theta^*)} 
\left(\frac{t_\alpha^{\beta,\gamma}}{2 n} - S_{(\gamma,\lambda)}(f_{\theta^*}, f_{\theta_0})\right)\right), 
~~~ \theta^* \neq \theta_0,
\end{eqnarray}
where $\Phi_n$ tends uniformly to the standard normal distribution function $\Phi$, 
$t_\alpha^{\beta,\gamma}$ is the $(1-\alpha)^{\rm th}$ quantile of the asymptotic distribution of
$\xi_n^{\gamma,\lambda}({\hat{\theta}_\beta}, {\theta_0})$ and 
$\sigma_{\beta,\gamma,\lambda}^2(\theta) = 
M_{\gamma,\lambda}(\theta)^T\Sigma_\beta(\theta)M_{\gamma,\lambda}(\theta)$
with 
$$
M_{\gamma,\lambda}(\theta) = \nabla S_{(\gamma,\lambda)}(f_\theta , f_{\theta_0}) =
\frac{1+\alpha}{B} \left[\int f_\theta^{1+\alpha} u_\theta - \int f_{\theta_0}^B f_\theta^A u_\theta\right].
$$
Here $A=1+\lambda(1-\gamma)$ and $B=\gamma-\lambda(1-\gamma)$, 
as defined in the statement of the $S$-divergence.
\qed
\end{theorem}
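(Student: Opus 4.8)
The plan is to write the power directly as a tail probability of the test statistic and then linearize the $S$-divergence about the true alternative parameter $\theta^*$. Writing the power at $\theta^*$ as
$$\pi_{n,\alpha}^{\beta,\gamma,\lambda}(\theta^*) = P_{\theta^*}\left(\xi_n^{\gamma,\lambda}(\hat{\theta}_\beta, \theta_0) > t_\alpha^{\beta,\gamma}\right) = P_{\theta^*}\left(S_{(\gamma,\lambda)}(f_{\hat{\theta}_\beta}, f_{\theta_0}) > \frac{t_\alpha^{\beta,\gamma}}{2n}\right),$$
I would exploit the fact that, under the alternative, the MDPDE $\hat{\theta}_\beta$ is consistent for $\theta^*$ and $n^{1/2}$-asymptotically normal, with $\sqrt{n}(\hat{\theta}_\beta - \theta^*) \xrightarrow{d} N(0, \Sigma_\beta(\theta^*))$ guaranteed by the Basu et al.~conditions. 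The central idea is that the randomness in $S_{(\gamma,\lambda)}(f_{\hat{\theta}_\beta}, f_{\theta_0})$ enters only through $\hat{\theta}_\beta$, so a first-order delta-method argument will transfer the normality of the estimator to the divergence itself.

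Concretely, I would perform a Taylor expansion of the map $\theta \mapsto S_{(\gamma,\lambda)}(f_\theta, f_{\theta_0})$ about $\theta = \theta^*$,
$$S_{(\gamma,\lambda)}(f_{\hat{\theta}_\beta}, f_{\theta_0}) = S_{(\gamma,\lambda)}(f_{\theta^*}, f_{\theta_0}) + M_{\gamma,\lambda}(\theta^*)^T (\hat{\theta}_\beta - \theta^*) + o_p(\|\hat{\theta}_\beta - \theta^*\|),$$
where $M_{\gamma,\lambda}(\theta^*) = \nabla S_{(\gamma,\lambda)}(f_\theta, f_{\theta_0})|_{\theta=\theta^*}$ is precisely the gradient in the statement. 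Multiplying by $\sqrt{n}$ and invoking the asymptotic normality of $\hat{\theta}_\beta$ together with Slutsky's theorem would then give
$$\sqrt{n}\left[S_{(\gamma,\lambda)}(f_{\hat{\theta}_\beta}, f_{\theta_0}) - S_{(\gamma,\lambda)}(f_{\theta^*}, f_{\theta_0})\right] \xrightarrow{d} N\left(0, \sigma_{\beta,\gamma,\lambda}^2(\theta^*)\right),$$
since the limiting variance is exactly $M_{\gamma,\lambda}(\theta^*)^T \Sigma_\beta(\theta^*) M_{\gamma,\lambda}(\theta^*)$, which is $\sigma_{\beta,\gamma,\lambda}^2(\theta^*)$.

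Given this central limit theorem, I would finish by standardizing inside the tail probability: subtracting $S_{(\gamma,\lambda)}(f_{\theta^*}, f_{\theta_0})$ and dividing by $\sigma_{\beta,\gamma,\lambda}(\theta^*)/\sqrt{n}$ converts the defining event into
$$\left\{\frac{\sqrt{n}}{\sigma_{\beta,\gamma,\lambda}(\theta^*)}\left[S_{(\gamma,\lambda)}(f_{\hat{\theta}_\beta}, f_{\theta_0}) - S_{(\gamma,\lambda)}(f_{\theta^*}, f_{\theta_0})\right] > \frac{\sqrt{n}}{\sigma_{\beta,\gamma,\lambda}(\theta^*)}\left(\frac{t_\alpha^{\beta,\gamma}}{2n} - S_{(\gamma,\lambda)}(f_{\theta^*}, f_{\theta_0})\right)\right\},$$
whose probability converges to $1$ minus the standard normal distribution evaluated at the right-hand constant, with $\Phi$ replaced by $\Phi_n$ to record the non-asymptotic approximation error. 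The hard part will be justifying that the Taylor remainder is genuinely negligible after multiplication by $\sqrt{n}$ — that the $o_p$ term vanishes in probability — which needs the second derivatives of $S_{(\gamma,\lambda)}$ to be continuous and locally bounded near $\theta^*$ under the Lehmann conditions, and in controlling the uniformity asserted for $\Phi_n$. A useful sanity check is that, since $S_{(\gamma,\lambda)}(f_{\theta^*}, f_{\theta_0}) > 0$ for $\theta^* \neq \theta_0$ while $t_\alpha^{\beta,\gamma}/(2n) \to 0$, the argument of $\Phi_n$ tends to $-\infty$ and the approximated power tends to one, confirming consistency of the test.
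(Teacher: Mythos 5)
Your proposal is correct and is essentially the argument the paper itself relies on: the paper omits the proof of this theorem as a routine extension of the corresponding power-approximation result in \cite{Basu/etc:2013a}, which proceeds exactly as you do --- rewrite the power as a tail probability of $S_{(\gamma,\lambda)}(f_{\hat{\theta}_\beta},f_{\theta_0})$, Taylor-expand about $\theta^*$, apply the delta method using the asymptotic normality of the MDPDE under the alternative, and standardize, with $\Phi_n\to\Phi$ uniformly by P\'{o}lya's theorem. Your closing observations (negligibility of the remainder at the $\sqrt{n}$ scale under the stated conditions, and the consistency sanity check) are also consistent with the paper's Corollary on consistency.
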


\bigskip
\begin{corollary}
For any $\theta^* \neq \theta_0$, the probability of rejecting the null hypothesis $H_0$ 
at any fixed significance level $\alpha > 0 $ with the rejection rule 
$\xi_n^{\gamma,\lambda}({\hat{\theta}_\beta}, {\theta_0}) >  t_\alpha^{\beta,\gamma}$ 
tends to $1$ as $n \rightarrow \infty$. Thus the test statistic $\xi_n^{\gamma,\lambda}({\hat{\theta}_\beta}, {\theta_0}) $ is consistent.
\qed
\end{corollary}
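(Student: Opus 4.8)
The plan is to read off consistency directly from the power approximation established in Theorem \ref{THM:7aprox_power_one}. Writing $a_n$ for the argument of $\Phi_n$ appearing there, namely
$$
a_n = \frac{\sqrt{n}}{\sigma_{\beta,\gamma,\lambda}(\theta^*)}\left(\frac{t_\alpha^{\beta,\gamma}}{2n} - S_{(\gamma,\lambda)}(f_{\theta^*}, f_{\theta_0})\right),
$$
the asserted power equals $\pi_{n,\alpha}^{\beta,\gamma,\lambda}(\theta^*) = 1 - \Phi_n(a_n)$, so it suffices to show that $\Phi_n(a_n) \to 0$ as $n \to \infty$.

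First I would record the two facts that control the sign and growth of $a_n$. Since the $S$-divergence is a genuine divergence measure and the parametric family is identifiable, $\theta^* \neq \theta_0$ forces $S_{(\gamma,\lambda)}(f_{\theta^*}, f_{\theta_0}) =: S^* > 0$; moreover $\sigma_{\beta,\gamma,\lambda}(\theta^*)$ is a fixed positive and finite constant under the assumed regularity conditions. The critical value $t_\alpha^{\beta,\gamma}$ is likewise a fixed quantile of the asymptotic null distribution of Theorem \ref{THM:7asymp_null_one} and does not depend on $n$, so $t_\alpha^{\beta,\gamma}/(2n) \to 0$. Consequently the bracketed factor converges to $-S^* < 0$, and multiplying by $\sqrt{n}/\sigma_{\beta,\gamma,\lambda}(\theta^*) \to +\infty$ shows that $a_n \to -\infty$.

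The one point requiring genuine care is that \emph{both} the distribution function $\Phi_n$ and its argument $a_n$ vary with $n$; this is exactly where the uniform convergence of $\Phi_n$ to the standard normal $\Phi$, rather than mere pointwise convergence, is indispensable. Given $\varepsilon > 0$, uniform convergence supplies an $N_1$ with $\sup_x |\Phi_n(x) - \Phi(x)| < \varepsilon$ for all $n > N_1$, while continuity of $\Phi$ together with $a_n \to -\infty$ supplies an $N_2$ with $\Phi(a_n) < \varepsilon$ for all $n > N_2$. For $n > \max\{N_1, N_2\}$ one then has $\Phi_n(a_n) \leq |\Phi_n(a_n) - \Phi(a_n)| + \Phi(a_n) < 2\varepsilon$, whence $\Phi_n(a_n) \to 0$ and therefore $\pi_{n,\alpha}^{\beta,\gamma,\lambda}(\theta^*) = 1 - \Phi_n(a_n) \to 1$, which is precisely the claimed consistency. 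I expect this final estimate---disentangling the joint $n$-dependence through uniformity---to be the only delicate step, everything else being immediate bookkeeping from Theorem \ref{THM:7aprox_power_one}.
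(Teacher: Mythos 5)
Your proposal is correct and follows exactly the route the paper intends: the corollary is stated as an immediate consequence of Theorem \ref{THM:7aprox_power_one}, with the power tending to $1$ because $S_{(\gamma,\lambda)}(f_{\theta^*},f_{\theta_0})>0$ forces the argument of $\Phi_n$ to $-\infty$ while $t_\alpha^{\beta,\gamma}/(2n)\to 0$. Your explicit $\varepsilon$-argument exploiting the \emph{uniform} convergence of $\Phi_n$ to $\Phi$ is precisely the detail the paper's stated uniformity is there to justify, so the write-up is a faithful (and slightly more careful) rendering of the paper's omitted proof.
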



\section{Robustness Properties of the SDT}
\label{SEC:7sample1_simpleTest_robust}

Now let us consider the robustness properties of the general test statistic 
based on the $S$-divergence family (the SDT). 
We will see that the robustness of the proposed test statistic originates from 
the robustness of the minimum DPD estimator used in the test statistic and 
hence the proposed test will be seen to be highly robust for $\beta >0$.
As a particular case, all the results to be covered in this section will also hold 
for the DPD based test developed in \cite{Basu/etc:2013a}; this will fill the gap of the absence 
of any theoretical robustness results in their paper.

\subsection{Influence Function of the Test}

Let us first consider Hampel's influence function of the test 
\citep{Rousseeuw/Ronchetti:1979,Rousseeuw/Ronchetti:1981, Hampel/etc:1986}. 
Ignoring the multiplier $2n$ in our test statistic 
we define the SDT functional as
$$
T_{\gamma,\lambda}^{(1)}(G) = S_{(\gamma,\lambda)}(f_{T_\beta(G)},f_{\theta_0}),
$$ 
where $T_\beta(G)$ is the minimum DPD functional defined in \cite{Basu/etc:2011}. 
Now consider the contaminated distribution $G_\epsilon = (1-\epsilon) G + \epsilon \wedge_y$ 
with $\epsilon$ being the contamination proportion and $\wedge_y$ being the degenerate distribution
with all its mass at the contamination point $y$.
Then Hampel's first-order influence function of the SDT functional is given by 
$$
IF(y; T_{\gamma,\lambda}^{(1)}, G) 
= \left.\frac{\partial}{\partial\epsilon}T_{\gamma,\lambda}^{(1)}(G_\epsilon) \right|_{\epsilon=0} 
= M_{\gamma,\lambda}(T_\beta(G))^T IF(y; T_\beta, G),
$$
where $IF(y; T_\beta, G) = \frac{\partial}{\partial\epsilon}T_\beta(G_\epsilon) \big|_{\epsilon=0}$ 
is the influence function of the minimum DPD functional $T_\beta$ and 
$M_{\gamma,\lambda}(T_{\beta}(G))
 = \frac{\partial}{\partial\theta}S_{(\gamma,\lambda)}(f_{\theta},f_{\theta_0})\big|_{\theta=T_\beta(G)}$. 
In general the Influence function of a test is evaluated at the null distribution $G=F_{\theta_0}$. 
However, by the Fisher consistency property of the functional $T_\beta(\cdot)$, 
we get $T_\beta(F_{\theta_0}) = \theta_0$ and $M_{\gamma,\lambda}(\theta_0)=0$ 
so that the Hampel's first-order influence function of our test statistic is zero at the null.

Therefore, we need to consider higher order influence functions of the proposed SDT. 
The second order influence function of our test statistic can be seen to have the form
\begin{eqnarray}
IF_2(y; T_{\gamma,\lambda}^{(1)}, G) &=& 
\frac{\partial^2}{\partial^2\epsilon}T_{\gamma,\lambda}^{(1)}(G_\epsilon) \big|_{\epsilon=0} \nonumber \\
&=& M_{\gamma,\lambda}(T_\beta(G))^T 
\frac{\partial^2}{\partial\epsilon^2} T_\beta(G_\epsilon)\big|_{\epsilon=0} \nonumber\\
&&  + IF(y; T_\beta, G)^T \nabla^2 S_{(\gamma,\lambda)}(f_\theta,f_{\theta_0})\big|_{\theta=T_\beta(G)} 
IF(y; T_\beta, G). \nonumber
\end{eqnarray}
In particular, at the null distribution $G=F_{\theta_0}$, 
the second order influence function of the SDT becomes 
\begin{equation}
IF_2(y; T_{\gamma,\lambda}^{(1)}, F_{\theta_0}) 
= IF(y; T_\beta, F_{\theta_0})^T A_\gamma(\theta_0) IF(y; T_\beta, F_{\theta_0}).
\label{EQ:IF_SDT}
\end{equation}
Thus the influence function of the $S$-divergence based test at the null hypothesis is independent 
of the parameter $\lambda$ implying that the theoretical robustness of the proposed SDT will be independent 
of $\lambda$. Also, this influence function will be bounded if and only if the influence function of 
the minimum density power divergence functional is bounded. 
\cite{Basu/etc:1998} derived the influence function of the MDPDE 
which is bounded for all $\beta>0$ under most common parametric models;
however it is generally unbounded at $\beta=0$. Noting that the MDPDE with $\beta=0$ is indeed the MLE, 
it shows that the use of a non-robust estimator like the MLE leads to a non-robust overall 
test procedure as well.


\subsection{Level and Power under contamination and the corresponding Influence Functions}

Next we consider the effect of contamination on the level and power of the proposed test 
which will give us a clearer picture about the robustness of the test. 
As the test is consistent, we study its power under the contiguous alternative hypotheses  
$\theta_n = \theta_0 + \frac{\Delta}{\sqrt{n}}$ with $\Delta$ being 
a vector of positive reals having the same dimension as the parameter vector and $\theta_n \in \Theta$. 
In order to explore the effect of contamination on the power and size of the test, 
we also need to consider some contamination over these contiguous alternatives. 
Following \cite{Hampel/etc:1986}, one must consider the contaminations such that 
their effect tends to zero as $\theta_n$ tends to $\theta_0$ at the same rate 
to avoid confusion between the null and alternative neighborhoods 
\cite[also see][]{Huber/Carol:1970, Heritier/Ronchetti:1994, Toma/Broniatowski:2010}. 
Therefore, we consider the contaminated distributions 
$$
F_{n,\epsilon,y}^L = \left(1-\frac{\epsilon}{\sqrt{n}}\right) F_{\theta_0} 
+ \frac{\epsilon}{\sqrt{n}} \wedge_y ~~~~~~~~~~~~~~ \mbox{ for level, }
$$
and
$$
F_{n,\epsilon,y}^P = \left(1-\frac{\epsilon}{\sqrt{n}}\right) F_{\theta_n} 
+ \frac{\epsilon}{\sqrt{n}} \wedge_y ~~~~~~~~~~~~~~ \mbox{ for power. }
$$
Then the level influence function (LIF) is given by
$$
LIF(y; T_{\gamma,\lambda}^{(1)}, F_{\theta_0}) = \lim_{n \rightarrow \infty} 
~ \frac{\partial}{\partial \epsilon} 
P_{F_{n,\epsilon,y}^L}(\xi_n^{\gamma,\lambda}({\hat{\theta}_\beta}, {\theta_0})> t_\alpha^{\beta,\gamma}) 
\big|_{\epsilon=0},
$$
and the power influence function (PIF) is given by 
$$
PIF(y; T_{\gamma,\lambda}^{(1)}, F_{\theta_0}) = \lim_{n \rightarrow \infty} 
~ \frac{\partial}{\partial \epsilon} 
P_{F_{n,\epsilon,y}^P}(\xi_n^{\gamma,\lambda}({\hat{\theta}_\beta}, {\theta_0})>t_\alpha^{\beta,\gamma}) 
\big|_{\epsilon=0}.
$$
We begin with the derivation of the asymptotic power under contaminated distributions
which is presented in the  following theorem.

\bigskip
\begin{theorem}
	Assume that the Lehmann and Basu et al.~conditions hold for the model density.
	Then for any $\Delta \in \mathbb{R}^p$ and $\epsilon \geq 0$, we have the following:
	\begin{itemize}
		\item[(i)] The asymptotic distribution of the $S$-divergence based test statistics 
		$\xi_n^{\gamma,\lambda}({\hat{\theta}_\beta}, {\theta_0})$ under $F_{n,\epsilon,y}^P$ is the same 
		as that of the quadratic form $W^T A_\gamma(\theta_0)W$, where $W$ follows a $p$-variate normal distribution 
		with mean 
		$$\widetilde{\Delta} = \left[ \Delta + \epsilon IF(y;U_\beta,F_{\theta_0})\right]$$ 
		and variance-covariance matrix $\Sigma_\beta(\theta_0)$. 
		Equivalently, this distribution is the same as that of 
		$\sum_{i=1}^r ~  \zeta_i^{\gamma, \beta}(\theta_0)\chi_{1,\delta_i}^2,$
		where $\zeta_1^{\gamma, \beta}(\theta_0), \cdots, \zeta_r^{\gamma, \beta}(\theta_0)$ 
		are the $r$ nonzero eigenvalues of $A_\gamma(\theta_0)\Sigma_\beta(\theta_0)$ 
		as in Theorem \ref{THM:7asymp_null_one} and $\chi_{1,\delta_1}^2, \ldots, \chi_{1,\delta_r}^2$ 
		are independent non-central chi-square variables having degree of freedom one and non-centrality parameters 
		$\delta_1, \ldots, \delta_r$ respectively with $\delta_i = \mu_i^2$ and 
		$\mu = (\mu_1,\ldots,\mu_p)^T =  P_{\beta,\gamma}(\theta_0)\Sigma_\beta^{-1/2}(\theta_0)\widetilde{\Delta}$ and 
		$P_{\beta,\gamma}(\theta_0)$ is the matrix of normalized eigenvectors of $A_\gamma(\theta_0)\Sigma_\beta(\theta_0)$.
		
		\item[(ii)] The asymptotic power of the proposed SDT under contaminated distribution 
		$F_{n,\epsilon,y}^P$ is given by 
		\begin{eqnarray}
		Power(\Delta, \epsilon) &=& \lim_{n \rightarrow \infty} ~ 
		P_{F_{n,\epsilon,y}^P}(\xi_n^{\gamma,\lambda}({\hat{\theta}_\beta}, {\theta_0}) >  t_\alpha^{\beta,\gamma}) 
		\nonumber \\
		&=& \sum\limits_{v=0}^{\infty} ~ C_v^{\gamma, \beta}(\theta_0, \widetilde{\Delta}) 
		P\left(\chi_{r+2v}^2 > \frac{t_\alpha^{\beta,\gamma}}{\zeta_{(1)}^{\gamma, \beta}(\theta_0)}\right),
		\label{EQ:7asymp_power_cont_null}
		\end{eqnarray}
		where $\chi_p^2$ denotes a chi-square random variable with $p$ degrees of freedom,  
		$\zeta_{(1)}^{\gamma, \beta}(\theta_0)$ is the minimum of $\zeta_{i}^{\gamma, \beta}(\theta_0)$s 
		for $i=1, \ldots,r$  and
		\begin{eqnarray}
		C_v^{\gamma, \beta}(\theta_0, \widetilde{\Delta}) = \frac{1}{v!}\left(\prod\limits_{j=1}^{r}
		\frac{\zeta_{(1)}^{\gamma, \beta}(\theta_0)}{\zeta_{j}^{\gamma,\beta}(\theta_0)}\right)^{1/2}  \cdot
		e^{-\frac{\delta}{2}} E(\hat{Q}^v), \nonumber
		\end{eqnarray}
		with $\delta = \mu^T\mu = \sum\limits_{j=1}^{r}\delta_j$ and 
		$$
		\hat{Q} = \frac{1}{2} \sum\limits_{j=1}^{r} 
		\left[\left(1 - \frac{\zeta_{(1)}^{\gamma, \beta}(\theta_0)}{\zeta_{j}^{\gamma,\beta}(\theta_0)}\right)^{1/2}Z_j 
		+ \mu_j \left(\frac{\zeta_{(1)}^{\gamma,\beta}(\theta_0)}{\zeta_{j}^{\gamma,\beta}(\theta_0)}\right)^{1/2}\right]^2,
		$$
		for $r$ independent standard normal random variables $Z_1 \ldots, Z_r$.
	\end{itemize}
	\label{THM:7asymp_power_one}
\end{theorem}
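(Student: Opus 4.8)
The plan is to reduce the test statistic to a quadratic form in the centred and scaled minimum DPD estimator and then read off its limiting law. Write $W_n = \sqrt{n}(\hat\theta_\beta - \theta_0)$. The first step is to show that, under the contiguous contaminated sequence $F_{n,\epsilon,y}^P$, $W_n \to W \sim N_p(\widetilde\Delta,\Sigma_\beta(\theta_0))$. Granting this, a second-order Taylor expansion of the $S$-divergence about $\theta_0$ finishes part (i): since $S_{(\gamma,\lambda)}(f_{\theta_0},f_{\theta_0})=0$ and the gradient $M_{\gamma,\lambda}(\theta_0)=0$ (the divergence being minimised at the null), one gets $S_{(\gamma,\lambda)}(f_{\hat\theta_\beta},f_{\theta_0}) = \tfrac12(\hat\theta_\beta-\theta_0)^T A_\gamma(\theta_0)(\hat\theta_\beta-\theta_0) + o_P(\|\hat\theta_\beta-\theta_0\|^2)$. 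Multiplying by $2n$ and using $\hat\theta_\beta-\theta_0 = O_P(n^{-1/2})$ to absorb the remainder into an $o_P(1)$ term, $\xi_n^{\gamma,\lambda}(\hat\theta_\beta,\theta_0) = W_n^T A_\gamma(\theta_0)W_n + o_P(1)$, so by the continuous mapping theorem its limit is $W^T A_\gamma(\theta_0)W$.

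The delicate ingredient is the limiting law of $W_n$ under $F_{n,\epsilon,y}^P$. I would obtain it from the M-estimation characterisation of the MDPDE, $W_n = J_\beta(\theta_0)^{-1}\,n^{-1/2}\sum_{i=1}^n \psi_\beta(X_i,\theta_0) + o_P(1)$, where $\psi_\beta(x,\theta)=u_\theta(x)f_\theta(x)^\beta - \int u_\theta f_\theta^{1+\beta}$ is the MDPDE estimating function. The data now form a triangular array from $F_{n,\epsilon,y}^P=(1-\epsilon/\sqrt n)F_{\theta_n}+(\epsilon/\sqrt n)\wedge_y$. A direct mean--variance computation shows that the contiguous drift $F_{\theta_n}$ contributes $J_\beta(\theta_0)\Delta$ to the limiting mean of $n^{-1/2}\sum_i\psi_\beta(X_i,\theta_0)$ (via Fisher consistency and a one-term expansion in $\theta$), the point mass contributes $\epsilon\,\psi_\beta(y,\theta_0)$, and the variance converges to $V_\beta(\theta_0)$. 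Pre-multiplying by $J_\beta(\theta_0)^{-1}$ turns the mean into $\Delta + \epsilon J_\beta(\theta_0)^{-1}\psi_\beta(y,\theta_0) = \Delta + \epsilon\,IF(y;U_\beta,F_{\theta_0}) = \widetilde\Delta$ and the covariance into $J_\beta^{-1}V_\beta J_\beta^{-1}=\Sigma_\beta(\theta_0)$. The chi-square form then follows by diagonalising the quadratic form: writing $W = \widetilde\Delta + \Sigma_\beta^{1/2}(\theta_0)Z_0$ with $Z_0\sim N_p(0,I)$ and taking the spectral decomposition of the symmetric matrix $\Sigma_\beta^{1/2}(\theta_0)A_\gamma(\theta_0)\Sigma_\beta^{1/2}(\theta_0)$ (whose nonzero eigenvalues are the $\zeta_i^{\gamma,\beta}(\theta_0)$), the orthogonal change of variables $U = P_{\beta,\gamma}(\theta_0)(\Sigma_\beta^{-1/2}(\theta_0)\widetilde\Delta + Z_0)$ yields $W^T A_\gamma(\theta_0)W = \sum_{i=1}^r \zeta_i^{\gamma,\beta}(\theta_0)U_i^2$ with $U_i\sim N(\mu_i,1)$ independent, $\mu = P_{\beta,\gamma}(\theta_0)\Sigma_\beta^{-1/2}(\theta_0)\widetilde\Delta$, i.e. the stated weighted sum of $\chi^2_{1,\delta_i}$ with $\delta_i=\mu_i^2$.

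For part (ii) the power is $\lim_n P_{F_{n,\epsilon,y}^P}(\xi_n^{\gamma,\lambda}>t_\alpha^{\beta,\gamma}) = P(Q>t_\alpha^{\beta,\gamma})$ with $Q=\sum_{i=1}^r\zeta_i^{\gamma,\beta}(\theta_0)\chi^2_{1,\delta_i}$. The plan is to expand the survival function of $Q$ as an infinite mixture of central chi-square tails via its moment generating function $M_Q(s)=\prod_{j=1}^r(1-2s\zeta_j)^{-1/2}\exp\{s\zeta_j\mu_j^2/(1-2s\zeta_j)\}$. Factoring out the smallest weight $\zeta_{(1)}^{\gamma,\beta}(\theta_0)$ and setting $u=(1-2s\zeta_{(1)})^{-1}$, a short computation rewrites $M_Q(s) = (\prod_j \zeta_{(1)}/\zeta_j)^{1/2}\,u^{r/2}\,e^{-\delta/2}\,E(e^{u\hat Q})$, where $\hat Q$ is exactly the quadratic form displayed in the statement. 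Expanding $E(e^{u\hat Q})=\sum_{v\ge0}u^v E(\hat Q^v)/v!$ identifies the coefficient of $u^{(r+2v)/2}$ as $C_v^{\gamma,\beta}(\theta_0,\widetilde\Delta)$; since $u^{(r+2v)/2}=(1-2s\zeta_{(1)})^{-(r+2v)/2}$ is the MGF of $\zeta_{(1)}^{\gamma,\beta}(\theta_0)\chi^2_{r+2v}$, uniqueness of the MGF gives $Q \stackrel{d}{=} \sum_v C_v\,[\zeta_{(1)}^{\gamma,\beta}(\theta_0)\chi^2_{r+2v}]$, whence $P(Q>t_\alpha^{\beta,\gamma})=\sum_v C_v\,P(\chi^2_{r+2v}>t_\alpha^{\beta,\gamma}/\zeta_{(1)}^{\gamma,\beta}(\theta_0))$. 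Verifying that the Gaussian expectation $E(e^{u\hat Q})$ reproduces precisely the required product-and-exponential factor, a one-dimensional completion of the square in each coordinate, is the only computation that needs care here.

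I expect the main obstacle to be the rigorous derivation of the limiting law of $W_n$ under the triangular array $F_{n,\epsilon,y}^P$: one must justify the M-estimator linearisation uniformly along the sequence and verify contiguity of $\{F_{n,\epsilon,y}^P\}$ to $\{F_{\theta_0}^{\otimes n}\}$, so that the $o_P(1)$ remainder stays negligible and the drift and contamination contributions to the mean separate cleanly. The remaining algebra, namely the Taylor expansion, the spectral diagonalisation, and the generating-function expansion, is routine once that limit is in hand.
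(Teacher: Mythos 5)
Your proposal is correct, but it takes a genuinely different route from the paper's proof in both parts. For part (i), the paper never works with $\sqrt{n}(\hat\theta_\beta-\theta_0)$ directly: it centres the estimator at the contaminated functional value $\theta_n^*=U_\beta(F_{n,\epsilon,y}^P)$, performs two Taylor expansions (of $S_{(\gamma,\lambda)}(f_\theta,f_{\theta_0})$ around $\theta_n^*$ at $\hat\theta_\beta$, and around $\theta_0$ at $\theta_n^*$), and extracts the drift $\widetilde{\Delta}$ from an influence-function (von Mises) expansion of the functionals $U_\beta$ and $M_{\gamma,\lambda}$, reusing the identities $M_{\gamma,\lambda}(\theta_0)=0$, $IF(y;M_{\gamma,\lambda},F_{\theta_0})=A_\gamma(\theta_0)IF(y;U_\beta,F_{\theta_0})$ and the second-order influence function $IF_2$ computed earlier in the robustness section. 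You instead centre at $\theta_0$, derive the limit law $N_p(\widetilde{\Delta},\Sigma_\beta(\theta_0))$ of $\sqrt{n}(\hat\theta_\beta-\theta_0)$ by a triangular-array CLT applied to the MDPDE estimating function (your explicit mean computation $\Delta+\epsilon J_\beta^{-1}(\theta_0)\psi_\beta(y,\theta_0)=\widetilde{\Delta}$ is the paper's expansion of $U_\beta(F_{n,\epsilon,y}^P)$ in disguise), and then need only a single Taylor expansion of the divergence; the spectral-decomposition step is identical in both. What each buys: your route is more direct and makes the source of the drift transparent, at the cost of having to justify the M-estimator linearisation and a Lindeberg condition along the contaminated triangular array, which you rightly flag as the main burden; the paper's route leans on the asserted asymptotic normality of $\sqrt{n}(\hat\theta_\beta-\theta_n^*)$ under contamination and on functional differentiability, so neither argument fully discharges this technical step, and the two are at a comparable level of rigour. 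For part (ii) the contrast is sharper: the paper disposes of it in one sentence by citing the Kotz--Johnson--Boyd series expansion, whereas you rederive that expansion from the moment generating function; your factorisation checks out, since with $t_j=\zeta_{(1)}^{\gamma,\beta}(\theta_0)/\zeta_j^{\gamma,\beta}(\theta_0)$ and $u=(1-2s\zeta_{(1)}^{\gamma,\beta}(\theta_0))^{-1}$ one has $1-2s\zeta_j^{\gamma,\beta}(\theta_0)=t_j^{-1}(1-2s\zeta_{(1)}^{\gamma,\beta}(\theta_0))\bigl[1-(1-t_j)u\bigr]$, and a one-dimensional completion of squares gives $E\exp\bigl\{\tfrac{u}{2}\bigl((1-t_j)^{1/2}Z_j+\mu_j t_j^{1/2}\bigr)^2\bigr\}=\bigl[1-u(1-t_j)\bigr]^{-1/2}\exp\bigl\{\tfrac{u\mu_j^2 t_j}{2[1-u(1-t_j)]}\bigr\}$, which reproduces exactly the noncentral factors after multiplying by $e^{-\delta/2}$. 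So your part (ii) is self-contained where the paper's is citation-dependent, which is a genuine (if modest) gain in completeness.
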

\begin{proof}	
	To prove part (i), let us denote $\theta_n^* = U_\beta(F_{n,\epsilon,y}^P)$.
	We consider the second order Taylor series expansion of $S_{(\gamma,\lambda)}(f_\theta, f_{\theta_0})$ 
	around $\theta = \theta_n^*$ at $\theta = \hat{\theta}_\beta $ as,
	\begin{eqnarray}
	S_{(\gamma,\lambda)}(f_{\hat{\theta}_\beta}, f_{\theta_0}) 
	&=& S_{(\gamma,\lambda)}(f_{\theta_n^*}, f_{\theta_0}) 
	+ M_{\gamma,\lambda}(\theta_n^*)^T (\hat{\theta}_\beta - \theta_n^*) \nonumber\\
	&& ~~ + \frac{1}{2}(\hat{\theta}_\beta - \theta_n^*)^TA_\gamma(\theta_n^*)(\hat{\theta}_\beta - \theta_n^*) 
	+ o(||\hat{\theta}_\beta - \theta_n^*||^2). \nonumber
	\end{eqnarray}
	Now from the asymptotic distribution of the MDPDE and the consistency of $\theta_n^*$ we know that 
	$\sqrt{n} (\hat{\theta}_\beta - \theta_n^*) \displaystyle\mathop{\rightarrow}^\mathcal{D}  
	N(0, \Sigma_\beta(\theta_0) ),$
	and $ A_\gamma (\theta_n^*) \displaystyle\mathop{\rightarrow}^\mathcal{P} A_\gamma (\theta_0)$ as $n\rightarrow \infty$.
	Further using the Taylor series expansion of $M_{\gamma,\lambda}(\theta)$ 
	around $\theta = \theta_0$ at $\theta = \theta_n^*$, we get 
	$$
	M_{\gamma,\lambda}( \theta_n^*) - M_{\gamma,\lambda}(\theta_0) 
	= \frac{1}{\sqrt{n}}  A_\gamma(\theta_0)\Delta + \frac{\epsilon}{\sqrt{n}} IF(y;M_{\gamma,\lambda},F_{\theta_0}) 
	+ o\left(n^{-1/2}\right)
	$$
	But $M_{\gamma,\lambda}(\theta_0)=0$ and  
	$IF(y;M_{\gamma,\lambda},F_{\theta_0})= A_\gamma(\theta_0) IF(y; U_\beta,F_{\theta_0})$ 
	so that we get 
	\begin{eqnarray}
	\sqrt{n} M_{\gamma,\lambda}( \theta_n^*) 
	&=&   A_\gamma(\theta_0)\widetilde{\Delta}+ o\left(1\right), \nonumber
	\end{eqnarray}
	with $\widetilde{\Delta} = \left[ \Delta + \epsilon IF(y;U_\beta,F_{\theta_0})\right]$. 
	Again using the second order Taylor series expansion of $S_{(\gamma,\lambda)}(f_{\theta}, f_{\theta_0})$ 
	around $\theta=\theta_0$ at $\theta=\theta_n^*$, we get
	\begin{eqnarray}
	&& S_{(\gamma,\lambda)}(f_{\theta_n^*}, f_{\theta_0}) - S_{(\gamma,\lambda)}(f_{\theta_0}, f_{\theta_0}) 
	\nonumber\\
	&& = \frac{\epsilon}{\sqrt{n}} IF(y; T_{\gamma,\lambda}^{(1)},F_{\theta_0}) 
	+ \frac{1}{\sqrt{n}} \Delta^T M_{\gamma,\lambda}(\theta_0) 
	+ o\left(n^{-1/2}\right) \times M_{\gamma,\lambda}(\theta_0) 
	\nonumber \\ && 
	+ \frac{\epsilon^2}{2n} IF_2(y; T_{\gamma,\lambda}^{(1)}, F_{\theta_0})  
	+ \frac{1}{2n} \Delta^T A_\gamma(\theta_0)\Delta  
	+ \frac{\epsilon}{n} \Delta^T IF(y;M_{\gamma,\lambda},F_{\theta_0})  + o\left(\frac{1}{n}\right). 
	\nonumber\\
	&&~~
	\label{EQ:7number1}
	\end{eqnarray}
	But, $S_{(\gamma,\lambda)}(f_{\theta_0}, f_{\theta_0})=0$, 
	$IF(y; T_{\gamma,\lambda}^{(1)},F_{\theta_0})=0$ and $IF_2(y; T_{\gamma,\lambda}^{(1)}, F_{\theta_0})$
	is given in Equation (\ref{EQ:IF_SDT}).
	Thus Equation (\ref{EQ:7number1}) simplifies to 
	\begin{eqnarray}
	2n S_{(\gamma,\lambda)}(f_{\theta_n^*}, f_{\theta_0}) &=& \Delta^T  A_\gamma(\theta_0)\Delta 
	+ \epsilon^2 IF(y; U_\beta, F_{\theta_0}) A_\gamma(\theta_0)^T IF(y; U_\beta, F_{\theta_0}) \nonumber\\ 
	&& + 2\epsilon \Delta^T A_\gamma(\theta_0) IF(y; U_\beta, F_{\theta_0}) 
	+ o\left(1\right). \nonumber
	\end{eqnarray}
	Now noting that $n \times o(||\hat{\theta}_\beta - \theta_n^*||^2) = o_p(1)$, we get 
	\begin{eqnarray}
	2n S_{(\gamma,\lambda)}(f_{\hat{\theta}_\beta}, f_{\theta_0}) 
	&=& \widetilde{\Delta}^T  A_\gamma(\theta_0)\widetilde{\Delta} 
	+  2 \widetilde{\Delta}^T A_\gamma(\theta_0) \sqrt{n}(\hat{\theta}_\beta - \theta_n^*) \nonumber\\
	&+&\sqrt{n}(\hat{\theta}_\beta-\theta_n^*)^TA_\gamma(\theta_n^*)\sqrt{n}(\hat{\theta}_\beta - \theta_n^*) 
	+ o_P(1) +  o\left(1\right)\nonumber\\
	&=& \left[\widetilde{\Delta} + \sqrt{n}(\hat{\theta}_\beta-\theta_n^*)\right]^T  A_\gamma(\theta_0)
	\left[\widetilde{\Delta} + \sqrt{n}(\hat{\theta}_\beta-\theta_n^*)\right] \nonumber\\
	&& ~~~~~~~~~~~~~~~~~~~~~~~~~~~~~~~~+ o_P(1) +  o\left(1\right).\nonumber 
	\end{eqnarray}
	Thus under the probability $F_{n,\epsilon,y}^P$, the asymptotic distribution of the SDT statistic
	$\xi_n^{\gamma,\lambda}({\hat{\theta}_\beta}, {\theta_0}) 
	= 2n S_{(\gamma,\lambda)}(f_{\hat{\theta}_\beta}, f_{\theta_0})$ 
	is the same as the distribution of $(\widetilde{\Delta}+W_0)^TA_\gamma(\theta_0)(\widetilde{\Delta}+W_0)$, 
	where $W_0$ is a random variable following $N(0, \Sigma_\beta(\theta_0) )$ distribution. 
	Hence the first statement of part (i) follows by taking $W=(\widetilde{\Delta}+W_0)$.

	Now employing a spectral decomposition we get
	$$
	\Sigma_\beta(\theta_0)^{1/2}A_\gamma(\theta_0)\Sigma_\beta(\theta_0)^{1/2}
	=P_{\beta,\gamma}(\theta_0)^T \Gamma_r(\theta_0) P_{\beta,\gamma}(\theta_0),
	$$
	where $P_{\beta,\gamma}(\theta_0)$ is as defined in the statement of the theorem and $\Gamma_r(\theta_0)$ is 
	the diagonal matrix having diagonal entries as the eigenvalues  of $A_\gamma(\theta_0)\Sigma_\beta(\theta_0)$.
	Now
	\begin{eqnarray}
	&& W^T A_\gamma(\theta_0)W =  W^T\Sigma_\beta(\theta_0)^{-1/2} 
	\left[\Sigma_\beta(\theta_0)^{1/2}A_\gamma(\theta_0)\Sigma_\beta(\theta_0)^{1/2}\right]
	\Sigma_\beta(\theta_0)^{-1/2}W \nonumber\\
	&& ~~~= \left(W_0 + \widetilde{\Delta}\right)^T\Sigma_\beta(\theta_0)^{-1/2} 
	\left[P_{\beta,\gamma}(\theta_0)^T \Gamma_r(\theta_0) P_{\beta,\gamma}(\theta_0)\right]
	\Sigma_\beta(\theta_0)^{-1/2}\left(W_0 + \widetilde{\Delta}\right) \nonumber\\
	&& ~~~= (W^* + \mu)^T\Gamma_r(\theta_0) (W^*+\mu), \nonumber
	\end{eqnarray} 
	where $W^* = (W_1^*, \ldots, W_p^*)= P_{\beta,\gamma}(\theta_0)\Sigma_\beta(\theta_0)^{-1/2}W_0$ 
	follows an $N(0, I_p)$ distribution and $\mu$ is as defined in the statement of the theorem. Thus, 
	$$
	W^T A_\gamma(\theta_0)W  = \sum_{i=1}^r ~  \zeta_i^{\gamma, \beta}(\theta_0)(W_i+\mu_i)^2,
	$$
	completing the proof of second statement of part (i).
	
	\bigskip
	Part (ii) follows from part (i) using the series expansion of the distribution function of 
	a linear combination of independent non-central chi-squares in terms of central chi-square distribution functions
	as derived in \cite{Kotz/etc:1967b}.
	
\end{proof}

\bigskip
Interestingly note that the asymptotic power under contiguous alternatives and contiguous contamination 
is independent of the parameter $\lambda$; thus asymptotically the power robustness of the 
SDT will be independent of $\lambda$. 
Further, substituting $\Delta=0$ or $\epsilon=0$ in the above theorem, we will get several 
important cases; these are presented in the following corollaries.

\bigskip
\begin{corollary}
	\label{COR:7cont_power_null}
	For $\Delta=\epsilon=0$, the distribution  $F_{n,\epsilon,y}^P$ coincides with the null distribution. 
	Putting $\Delta=\epsilon=0$ in Theorem \ref{THM:7asymp_power_one}, 
	we get $\widetilde{\Delta}=0$ so that $\delta_i =0$ for each $i$ and 
	the asymptotic distribution of the SDT statistics given in the previous theorem coincides 
	with its null distribution derived independently in Theorem \ref{THM:7asymp_null_one}.
	So part (ii) of Theorem \ref{THM:7asymp_power_one} gives an alternative approximation to 
	the critical values of the proposed SDT based on a suitable truncation  of the infinite series.
	Error bounds in such truncation can also be calculated from Remark \ref{REM:expansion_non-central_chi2} below 
	or from the results of \cite{Kotz/etc:1967a,Kotz/etc:1967b}.
	\qed
\end{corollary}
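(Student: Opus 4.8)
The plan is to obtain this corollary as a direct specialization of Theorem~\ref{THM:7asymp_power_one}, since the two defining quantities $\Delta$ and $\epsilon$ enter there only through the combined shift $\widetilde{\Delta} = \Delta + \epsilon\, IF(y;U_\beta,F_{\theta_0})$. First I would verify the claim about the contaminating distribution itself: substituting $\epsilon = 0$ into $F_{n,\epsilon,y}^P = (1 - \epsilon/\sqrt{n}) F_{\theta_n} + (\epsilon/\sqrt{n}) \wedge_y$ removes the contaminating mass entirely, leaving $F_{\theta_n}$, and substituting $\Delta = 0$ into $\theta_n = \theta_0 + \Delta/\sqrt{n}$ collapses the contiguous alternative to $\theta_0$; hence $F_{n,0,y}^P = F_{\theta_0}$ exactly (not merely in the limit), which is precisely the null distribution.

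Next I would trace the effect of $\Delta = \epsilon = 0$ through the noncentrality structure of part~(i). Directly from its definition $\widetilde{\Delta} = 0$, and since $\mu = P_{\beta,\gamma}(\theta_0)\Sigma_\beta^{-1/2}(\theta_0)\widetilde{\Delta}$ is linear in $\widetilde{\Delta}$, we get $\mu = 0$ and therefore $\delta_i = \mu_i^2 = 0$ for every $i = 1, \ldots, r$. With all noncentrality parameters vanishing, each noncentral chi-square $\chi_{1,\delta_i}^2$ reduces to a central chi-square on one degree of freedom, that is, to $Z_i^2$ for independent standard normals $Z_i$. Consequently the mixture $\sum_{i=1}^r \zeta_i^{\gamma,\beta}(\theta_0)\chi_{1,\delta_i}^2$ of Theorem~\ref{THM:7asymp_power_one}(i) becomes $\sum_{i=1}^r \zeta_i^{\gamma,\beta}(\theta_0) Z_i^2$, which is exactly the asymptotic null law established by the independent route of Theorem~\ref{THM:7asymp_null_one}. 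This coincidence is the substance of the corollary.

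To obtain the stated consequence for critical values, I would then read off part~(ii) at $\Delta = \epsilon = 0$. Here $\delta = \mu^T\mu = 0$ forces $e^{-\delta/2} = 1$, and since the Gaussian terms in $\hat Q$ retain their variances but lose their mean shifts $\mu_j$, the coefficients $C_v^{\gamma,\beta}(\theta_0, 0)$ reduce to pure null weights; the resulting series $\sum_{v=0}^\infty C_v^{\gamma,\beta}(\theta_0, 0)\, P(\chi_{r+2v}^2 > t_\alpha^{\beta,\gamma}/\zeta_{(1)}^{\gamma,\beta}(\theta_0))$ is then an exact expansion of the null tail probability of the SDT. Truncating it at finitely many terms yields a computable approximation to the upper quantile $t_\alpha^{\beta,\gamma}$, with the truncation error controlled by Remark~\ref{REM:expansion_non-central_chi2} or by the bounds of \cite{Kotz/etc:1967a,Kotz/etc:1967b}.

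Honestly, there is no genuine obstacle here: the corollary is a pure specialization and every step is a substitution. If anything warrants care, it is only the bookkeeping in the last paragraph, namely checking that setting $\delta = 0$ in the weights $C_v^{\gamma,\beta}$ and in $\hat Q$ remains consistent with the series reducing to a valid distribution-function expansion of a \emph{central} (rather than noncentral) linear combination of chi-squares, which is exactly the specialization of the series expansion invoked in the proof of Theorem~\ref{THM:7asymp_power_one}(ii).
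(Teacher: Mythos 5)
Your proposal is correct and follows exactly the route the paper intends: the corollary is a pure specialization of Theorem~\ref{THM:7asymp_power_one} obtained by substituting $\Delta=\epsilon=0$, which forces $\widetilde{\Delta}=0$, hence $\mu=0$ and $\delta_i=0$, collapsing the noncentral chi-square mixture to the central one of Theorem~\ref{THM:7asymp_null_one}, with the series in part~(ii) then serving as a truncatable expansion of the null tail probability. The paper offers no separate argument beyond this substitution, so your write-up simply makes explicit the bookkeeping the paper leaves implicit.
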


\medskip
\begin{corollary}
	\label{COR:7cont_power_one}
	Putting $\epsilon=0$ in Theorem \ref{THM:7asymp_power_one}, 
	we get the asymptotic distribution of the proposed SDT under the contiguous alternative hypotheses
	$H_{1,n}: \theta= \theta_n = \theta_0 + \frac{\Delta}{\sqrt{n}}$ as given in part (i) of 
	Theorem \ref{THM:7asymp_power_one} 	with $\widetilde{\Delta} = \Delta$. 
	Further, in this case part (ii) of the theorem yields the
	asymptotic power under contiguous alternative hypotheses given by  
	\begin{eqnarray}
	P_0 &=& Power(\Delta, \epsilon=0)  
	=\sum\limits_{v=0}^{\infty} ~ C_v^{\gamma, \beta}(\theta_0, {\Delta}) 
	P\left(\chi_{r+2v}^2 > \frac{t_\alpha^{\beta,\gamma}}{\zeta_{(1)}^{\gamma, \beta}(\theta_0)}\right). \nonumber
	\end{eqnarray}
	\qed
\end{corollary}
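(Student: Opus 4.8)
The plan is to obtain this corollary as a direct specialization of Theorem~\ref{THM:7asymp_power_one} at $\epsilon=0$, so that essentially no new argument is required beyond tracking how the quantities there simplify. First I would note that the theorem was established for every $\Delta\in\mathbb{R}^p$ and every $\epsilon\ge 0$, so the value $\epsilon=0$ is already inside the scope of its conclusions; the only task is to read off what those conclusions say at this boundary value and to confirm that the simplification is exactly as claimed.

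First I would observe that at $\epsilon=0$ the power contamination model $F_{n,\epsilon,y}^P=\left(1-\frac{\epsilon}{\sqrt{n}}\right)F_{\theta_n}+\frac{\epsilon}{\sqrt{n}}\wedge_y$ collapses to the pure contiguous alternative $F_{\theta_n}$, so that the distribution whose limiting behaviour is being described is precisely the one governing the power of the test under $H_{1,n}:\theta=\theta_n=\theta_0+\Delta/\sqrt{n}$. This identifies $Power(\Delta,0)$ with the asymptotic rejection probability under the uncontaminated contiguous alternatives, which is what $P_0$ is meant to denote.

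Next I would substitute $\epsilon=0$ into the effective non-centrality vector $\widetilde{\Delta}=\Delta+\epsilon\,IF(y;U_\beta,F_{\theta_0})$ appearing in Theorem~\ref{THM:7asymp_power_one}. Since the contamination term carries the explicit factor $\epsilon$, it vanishes and we obtain $\widetilde{\Delta}=\Delta$. Feeding this into part~(i) immediately yields that the limiting distribution of $\xi_n^{\gamma,\lambda}(\hat{\theta}_\beta,\theta_0)$ under $H_{1,n}$ is that of $W^TA_\gamma(\theta_0)W$ with $W\sim N(\Delta,\Sigma_\beta(\theta_0))$, equivalently $\sum_{i=1}^r\zeta_i^{\gamma,\beta}(\theta_0)\chi_{1,\delta_i}^2$ with the non-centralities now computed from $\mu=P_{\beta,\gamma}(\theta_0)\Sigma_\beta^{-1/2}(\theta_0)\Delta$.

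Finally I would substitute $\widetilde{\Delta}=\Delta$ into the power expression of part~(ii): the coefficients $C_v^{\gamma,\beta}(\theta_0,\widetilde{\Delta})$ become $C_v^{\gamma,\beta}(\theta_0,\Delta)$ and the series for $Power(\Delta,\epsilon)$ reduces verbatim to the displayed formula for $P_0$. There is no genuine obstacle here, since all the content is inherited from Theorem~\ref{THM:7asymp_power_one}; the only point to verify with any care is that the $\epsilon$-linear contamination contribution to $\widetilde{\Delta}$ really is the sole place where $\epsilon$ enters the conclusions, so that setting $\epsilon=0$ leaves the eigenvalues $\zeta_i^{\gamma,\beta}(\theta_0)$, the eigenvector matrix $P_{\beta,\gamma}(\theta_0)$, and the truncation and error-bound machinery of Remark~\ref{REM:expansion_non-central_chi2} entirely untouched.
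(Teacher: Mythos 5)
Your proposal is correct and is essentially identical to the paper's treatment: the corollary is obtained by direct substitution of $\epsilon=0$ into Theorem~\ref{THM:7asymp_power_one}, noting that $F_{n,0,y}^P=F_{\theta_n}$ and that $\epsilon$ enters the theorem's conclusions only through $\widetilde{\Delta}=\Delta+\epsilon\,IF(y;U_\beta,F_{\theta_0})$, which reduces to $\Delta$. Your explicit check that the eigenvalues, eigenvectors, and series machinery are unaffected is a careful touch, but it matches what the paper takes as immediate.
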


\medskip
\begin{corollary}
	Putting $\Delta=0$ in Theorem \ref{THM:7asymp_null_one}, 
	we get the asymptotic distribution of the proposed SDT under the probability distribution $F_{n,\epsilon,y}^L$ 
	as given in part (i) of the theorem with $\widetilde{\Delta} = \epsilon IF(y;U_\beta,F_{\theta_0})$.
	Hence, the asymptotic level under contamination has the form 
	\begin{eqnarray}
	\alpha_\epsilon &=& Power(\Delta=0,\epsilon)  
	=\sum\limits_{v=0}^{\infty} ~ C_v^{\gamma, \beta}(\theta_0, \epsilon IF(y;U_\beta,F_{\theta_0})) 
	P\left(\chi_{r+2v}^2 > \frac{t_\alpha^{\beta,\gamma}}{\zeta_{(1)}^{\gamma, \beta}(\theta_0)}\right). \nonumber
	\end{eqnarray}
	\qed
	\label{COR:7asymp_level_one}
\end{corollary}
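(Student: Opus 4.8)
The plan is to obtain this corollary as a direct specialization of Theorem~\ref{THM:7asymp_power_one}, exploiting the fact that the level contamination sequence $F_{n,\epsilon,y}^L$ is precisely the power contamination sequence $F_{n,\epsilon,y}^P$ evaluated at $\Delta=0$. First I would observe that the contiguous alternative is defined by $\theta_n = \theta_0 + \Delta/\sqrt{n}$, so setting $\Delta=0$ forces $\theta_n=\theta_0$ and hence $F_{\theta_n}=F_{\theta_0}$. Substituting this into the definition of $F_{n,\epsilon,y}^P$ collapses it termwise into
\[
\left(1-\frac{\epsilon}{\sqrt{n}}\right) F_{\theta_0} + \frac{\epsilon}{\sqrt{n}} \wedge_y = F_{n,\epsilon,y}^L,
\]
which is exactly the level contamination distribution. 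Consequently the rejection probability $P_{F_{n,\epsilon,y}^L}(\xi_n^{\gamma,\lambda}({\hat{\theta}_\beta}, {\theta_0}) > t_\alpha^{\beta,\gamma})$ that defines the asymptotic level under contamination coincides with $Power(\Delta=0,\epsilon)$, so no fresh asymptotic argument is required.

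Next I would invoke Theorem~\ref{THM:7asymp_power_one} at $\Delta=0$. The noncentrality shift there is $\widetilde{\Delta} = \Delta + \epsilon\, IF(y;U_\beta,F_{\theta_0})$, which reduces to $\widetilde{\Delta} = \epsilon\, IF(y;U_\beta,F_{\theta_0})$. By part~(i) of that theorem the limiting law of the statistic under $F_{n,\epsilon,y}^L$ is then that of $W^T A_\gamma(\theta_0) W$ with $W \sim N(\widetilde{\Delta}, \Sigma_\beta(\theta_0))$ for this reduced $\widetilde{\Delta}$, equivalently a mixture of noncentral chi-squares whose noncentrality parameters are $\delta_i=\mu_i^2$ with $\mu = P_{\beta,\gamma}(\theta_0)\Sigma_\beta^{-1/2}(\theta_0)\widetilde{\Delta}$.

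Finally I would read off the stated series by substituting $\widetilde{\Delta} = \epsilon\, IF(y;U_\beta,F_{\theta_0})$ into the power expansion of part~(ii), whose coefficients $C_v^{\gamma,\beta}(\theta_0,\widetilde{\Delta})$ depend on $\widetilde{\Delta}$ only through $\delta=\mu^T\mu$ and the moments $E(\hat{Q}^v)$. This yields
\[
\alpha_\epsilon = \sum_{v=0}^{\infty} C_v^{\gamma,\beta}\!\big(\theta_0,\, \epsilon\, IF(y;U_\beta,F_{\theta_0})\big)\, P\!\left(\chi_{r+2v}^2 > \frac{t_\alpha^{\beta,\gamma}}{\zeta_{(1)}^{\gamma,\beta}(\theta_0)}\right),
\]
as claimed. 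Since every analytical ingredient has already been established in Theorem~\ref{THM:7asymp_power_one}, the only genuine step is the identification $F_{n,\epsilon,y}^P\big|_{\Delta=0}=F_{n,\epsilon,y}^L$; I therefore expect no real obstacle, the result being a pure substitution rather than a new derivation.
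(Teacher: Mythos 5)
Your proposal is correct and follows exactly the route the paper intends: the corollary is a pure specialization of Theorem~\ref{THM:7asymp_power_one} (the citation of Theorem~\ref{THM:7asymp_null_one} in the corollary's statement is evidently a typo), obtained by noting that $F_{n,\epsilon,y}^P$ at $\Delta=0$ is precisely $F_{n,\epsilon,y}^L$ and substituting $\widetilde{\Delta}=\epsilon\, IF(y;U_\beta,F_{\theta_0})$ into parts (i) and (ii). No gap; the paper itself offers no further argument beyond this substitution.
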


\bigskip
\begin{remark}
	In the above Theorem \ref{THM:7asymp_null_one}, we have used a series expansion for the distribution of 
	the linear combination of independent non-central chi-square random variables in terms of the central chi-square
	distributions from the work of \cite{Kotz/etc:1967a}. There are in fact many more such expansions or  
	approximations for the distribution of these random variables available in the literature;
	for example see \cite{Press:1966}, \cite{Harville:1971} and 
	\cite{Liu/etc:2009} among many others. However, the particular expansion used here 
	is specifically useful for deriving the power influence function, as we will see next.
	
	Further, for any practical usage, we can use a finite truncated sum as an approximation 
	of the infinite series considered in (\ref{EQ:7asymp_power_cont_null}). For the truncated series 
	up to the $N$-th term, the error in approximation can be bounded by 
	\begin{eqnarray}
	e_N &=& \sum\limits_{v=N+1}^{\infty} ~ C_v^{\gamma, \beta}(\theta_0, \widetilde{\Delta}) 
	P\left(\chi_{r+2v}^2 > \frac{t_\alpha^{\beta,\gamma}}{\zeta_{(1)}^{\gamma, \beta}(\theta_0)}\right) 
	\nonumber\\
	&\leq& \sum\limits_{v=N+1}^{\infty} ~ C_v^{\gamma, \beta}(\theta_0, \widetilde{\Delta}) 
	= 1 - \sum\limits_{v=0}^{N} ~ C_v^{\gamma, \beta}(\theta_0, \widetilde{\Delta}). \nonumber
	\end{eqnarray}
	See \cite{Kotz/etc:1967a,Kotz/etc:1967b} for more accurate error bounds for such approximations.
	\qed
	\label{REM:expansion_non-central_chi2}
\end{remark}

\bigskip
Next we derive the power influence function of the proposed test statistics. 
Starting with the expression of $P(\Delta,\epsilon)$ as obtained in Theorem \ref{THM:7asymp_power_one}, 
we get the power influence function $PIF(\cdot)$ as
\begin{eqnarray}
PIF(y; T_{\gamma,\lambda}^{(1)}, F_{\theta_0}) 
&=& \left.\frac{\partial}{\partial\epsilon}  Power(\Delta,\epsilon) \right|_{\epsilon=0} \nonumber\\
&=&\sum\limits_{v=0}^{\infty}~\left.\frac{\partial}{\partial\epsilon}C_v^{\gamma,\beta}(\theta_0,\widetilde{\Delta})
\right|_{\epsilon=0} P\left(\chi_{r+2v}^2 > \frac{t_\alpha^{\beta,\gamma}}{\zeta_{(1)}^{\gamma, \beta}(\theta_0)}\right).~~~~
\label{EQ:7PIF_0simpleTest1}
\end{eqnarray} 
Now, note that for each $v \geq 0$, the quantities $C_v^{\gamma, \beta}(\theta_0, \widetilde{\Delta})$
depend on $\epsilon$ only through their second argument 
$\widetilde{\Delta} = \left[ \Delta + \epsilon IF(y;U_\beta,F_{\theta_0})\right]$
and at $\epsilon=0$ we have $\widetilde{\Delta} = \Delta$. Consider a Taylor series 
expansion of $C_v^{\gamma, \beta}(\theta_0, t)$ with respect to $t$ around $t=\Delta$
and evaluate it at $t=\widetilde{\Delta}$ to get
\begin{eqnarray}
&& C_v^{\gamma, \beta}(\theta_0, \widetilde{\Delta}) 
= C_v^{\gamma, \beta}(\theta_0, {\Delta}) + (\widetilde{\Delta} - \Delta) \cdot 
\left[\left.\frac{\partial}{\partial t}C_v^{\gamma,\beta}(\theta_0,t)^T\right|_{t=\Delta}\right]
+ o(||\widetilde{\Delta} - \Delta||)\nonumber\\
&&= C_v^{\gamma, \beta}(\theta_0, {\Delta}) + \epsilon IF(y;U_\beta,F_{\theta_0})^T \cdot 
\left[\left.\frac{\partial}{\partial t}C_v^{\gamma,\beta}(\theta_0,t)\right|_{t=\Delta}\right]\nonumber\\
&& ~~+ o(\epsilon IF(y;U_\beta,F_{\theta_0})).~~~~~
\label{EQ:7PIF_00simpleTest1}
\end{eqnarray}
Now differentiating it with respect to $\epsilon$ and evaluating at $\epsilon=0$, 
we get 
$$
\left.\frac{\partial}{\partial\epsilon}C_v^{\gamma,\beta}(\theta_0,\widetilde{\Delta})\right|_{\epsilon=0}
= IF(y;U_\beta,F_{\theta_0})^T \cdot
\left[\left.\frac{\partial}{\partial t}C_v^{\gamma,\beta}(\theta_0,t)\right|_{t=\Delta}\right],
$$
provided the influence function $IF(y;U_\beta,F_{\theta_0})$ is bounded.
Combining it with Equation (\ref{EQ:7PIF_0simpleTest1}), we finally get the required power influence function 
that is presented in the following theorem.

\bigskip
\begin{theorem}\label{THM:7PIF_simpleTest1}
	Assume that the Lehmann and Basu et al.~conditions hold for the model density and 
	the influence function $IF(y; U_{\beta}, F_{\theta_0})$ of the minimum DPD estimator is bounded. 
	Then, for any $\Delta \in \mathbb{R}^p$, the power influence function of the proposed SDT is given by 
	
	\begin{eqnarray}
	&& PIF(y; T_{\gamma,\lambda}^{(1)}, F_{\theta_0}) 
	= \left.\frac{\partial}{\partial\epsilon}  Power(\Delta,\epsilon) \right|_{\epsilon=0} \nonumber\\
	&&= IF(y;U_\beta,F_{\theta_0})^T \left(\sum\limits_{v=0}^{\infty}~
	\left[\left.\frac{\partial}{\partial t}C_v^{\gamma,\beta}(\theta_0,t)\right|_{t=\Delta}\right] 
	P\left(\chi_{r+2v}^2 > \frac{t_\alpha^{\beta,\gamma}}{\zeta_{(1)}^{\gamma, \beta}(\theta_0)}\right)\right).
	\nonumber\\
	&& ~~
	\label{EQ:7PIF_simpleTest1}
	\end{eqnarray} 
	\qed
\end{theorem}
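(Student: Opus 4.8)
The plan is to obtain the PIF directly by differentiating the closed-form power expression established in part (ii) of Theorem \ref{THM:7asymp_power_one}, exploiting the fact that the contamination proportion $\epsilon$ enters the asymptotic power only through the shifted non-centrality vector $\widetilde{\Delta} = \Delta + \epsilon\, IF(y; U_\beta, F_{\theta_0})$. First I would record that in
\[
Power(\Delta, \epsilon) = \sum_{v=0}^{\infty} C_v^{\gamma,\beta}(\theta_0, \widetilde{\Delta})\,
P\left(\chi_{r+2v}^2 > \frac{t_\alpha^{\beta,\gamma}}{\zeta_{(1)}^{\gamma,\beta}(\theta_0)}\right),
\]
the tail probabilities $P(\chi_{r+2v}^2 > t_\alpha^{\beta,\gamma}/\zeta_{(1)}^{\gamma,\beta}(\theta_0))$ are completely free of $\epsilon$. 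Hence the whole $\epsilon$-dependence is carried by the coefficients $C_v^{\gamma,\beta}(\theta_0, \widetilde{\Delta})$, and these depend on $\epsilon$ only through their second argument $\widetilde{\Delta}$, since $\mu = P_{\beta,\gamma}(\theta_0)\Sigma_\beta^{-1/2}(\theta_0)\widetilde{\Delta}$ is linear in $\widetilde{\Delta}$ and both $\delta = \mu^T\mu$ and the quadratic form $\hat{Q}$ are smooth (indeed analytic) functions of $\mu$.

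The second step is the chain-rule computation that turns the $\epsilon$-derivative into a directional derivative along the influence function. Because $\widetilde{\Delta}$ is affine in $\epsilon$ with $\partial\widetilde{\Delta}/\partial\epsilon = IF(y; U_\beta, F_{\theta_0})$ and $\widetilde{\Delta}|_{\epsilon=0} = \Delta$, a first-order Taylor expansion of $C_v^{\gamma,\beta}(\theta_0, t)$ in $t$ about $t = \Delta$, evaluated at $t = \widetilde{\Delta}$, gives
\[
\left.\frac{\partial}{\partial\epsilon} C_v^{\gamma,\beta}(\theta_0, \widetilde{\Delta})\right|_{\epsilon=0}
= IF(y; U_\beta, F_{\theta_0})^T
\left[\left.\frac{\partial}{\partial t} C_v^{\gamma,\beta}(\theta_0, t)\right|_{t=\Delta}\right],
\]
where boundedness of $IF(y; U_\beta, F_{\theta_0})$ ensures the Taylor remainder is genuinely $o(\epsilon)$. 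Substituting this termwise derivative into $\partial_\epsilon Power(\Delta,\epsilon)|_{\epsilon=0}$ and pulling the common vector $IF(y; U_\beta, F_{\theta_0})^T$ out in front of the summation yields precisely the claimed formula for $PIF(y; T_{\gamma,\lambda}^{(1)}, F_{\theta_0})$.

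The step I expect to be the main obstacle is legitimizing the interchange of the $\epsilon$-derivative with the infinite sum over $v$, that is, justifying the termwise differentiation used above. To handle this I would seek a bound on $\partial_t C_v^{\gamma,\beta}(\theta_0, t)$ that is summable against the chi-square tail probabilities, uniformly for $t$ in a neighbourhood of $\Delta$, so that a dominated-convergence argument permits differentiation under the summation sign. The factor $\tfrac{1}{v!}e^{-\delta/2} E(\hat{Q}^v)$ in $C_v^{\gamma,\beta}$ decays factorially in $v$, and its dependence on $t$ enters only through the finitely many shift parameters $\mu_j$ (with the distribution of $\hat{Q}$ otherwise fixed), so the required domination should follow from the moment structure of $\hat{Q}$ and the smoothness of $\delta(t)$ and $\mu_j(t)$. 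Finally I would emphasise that the whole argument presupposes that $IF(y; U_\beta, F_{\theta_0})$ is bounded—which holds for $\beta > 0$ under the Basu et al.~conditions—since otherwise the linear coefficient would fail to define a finite power influence function and the expansion would break down.
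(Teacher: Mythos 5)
Your proposal is correct and follows essentially the same route as the paper: differentiate the series expression for $Power(\Delta,\epsilon)$ from Theorem \ref{THM:7asymp_power_one} termwise, note that $\epsilon$ enters only through $\widetilde{\Delta}$, and use a first-order Taylor expansion of $C_v^{\gamma,\beta}(\theta_0,t)$ about $t=\Delta$ together with boundedness of $IF(y;U_\beta,F_{\theta_0})$ to convert the $\epsilon$-derivative into the directional derivative along the influence function. Your additional attention to justifying the interchange of differentiation and summation is a point the paper passes over silently, so your write-up is, if anything, slightly more careful than the original.
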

\bigskip
Note that the quantity $\left(\sum\limits_{v=0}^{\infty}~
\left[\left.\frac{\partial}{\partial t}C_v^{\gamma,\beta}(\theta_0,t)\right|_{t=\Delta}\right] 
P\left(\chi_{r+2v}^2 > \frac{t_\alpha^{\beta,\gamma}}{\zeta_{(1)}^{\gamma, \beta}(\theta_0)}\right)\right) = C^*$, 
say,  is independent of the parameter $\lambda$ and the contamination point $y$. 
Thus the above Theorem shows that the power influence function 
is bounded whenever the influence function of the MDPDE is bounded. 
But the latter condition is satisfied by most standard parametric models as shown
in \cite{Basu/etc:1998,Basu/etc:2011}. 
This implies the power robustness of the proposed test statistics and 
the extent of theoretical robustness is again independent of the parameter $\lambda$.

Further, combining Equations (\ref{EQ:7asymp_power_cont_null}) and (\ref{EQ:7PIF_00simpleTest1}),
we can get an interesting interpretation of the power influence function. 
Substituting the value of $C_v^{\gamma, \beta}(\theta_0, \widetilde{\Delta}) $ from (\ref{EQ:7PIF_00simpleTest1})
in to the Expression (\ref{EQ:7asymp_power_cont_null}) and using Corollary \ref{COR:7cont_power_one}, 
we get 
\begin{eqnarray}
Power(\Delta, \epsilon) &=& P_0 + \epsilon PIF(y; T_{\gamma,\lambda}^{(1)}, F_{\theta_0}) + o(\epsilon^2),
\nonumber
\end{eqnarray}
whenever the influence function of the MDPDE is bounded. Thus, the power influence function 
gives a first order approximation for the change in asymptotic power under contiguous alternative hypotheses
caused by a contiguous contamination in data. This is in fact comparable with the interpretation 
of the influence function of any estimator as an indicator of the change in its bias 
under contaminated data.

Finally, we can derive the level influence function of the SDT
just by putting $\Delta=0$ in above Expression (\ref{EQ:7PIF_simpleTest1}). 
Thus we have
\begin{eqnarray}
&& LIF(y; T_{\gamma,\lambda}^{(1)}, F_{\theta_0}) 
= \left.\frac{\partial}{\partial\epsilon}  Power(\Delta=0,\epsilon) \right|_{\epsilon=0} \nonumber\\
&&~~= IF(y;U_\beta,F_{\theta_0})^T \left(\sum\limits_{v=0}^{\infty}~
\left[\left.\frac{\partial}{\partial t}C_v^{\gamma,\beta}(\theta_0,t)\right|_{t=0}\right] 
P\left(\chi_{r+2v}^2 > \frac{t_\alpha^{\beta,\gamma}}{\zeta_{(1)}^{\gamma, \beta}(\theta_0)}\right)\right).
~~~~~\nonumber
\label{EQ:7LIF_simpleTest1}
\end{eqnarray} 
Thus, whenever the influence function of the MDPDE is bounded, which is the case for all $\beta >0$, 
the asymptotic level of the proposed test statistics will be unaffected by the contiguous contamination. 

\bigskip
\begin{remark}\textbf{[The case of $p=r=1$]}\\
	All the results derived in this subsection including the power and level influence function 
	can be further simplified for the cases of univariate parameters with $p=r=1$.
	In this particular case, it follows from Theorem \ref{THM:7asymp_null_one} that the 
	asymptotic null distribution of the proposed SDT statistic is the same as 
	$\zeta_{(1)}^{\gamma, \beta}(\theta_0) Z_1$, where $Z_1$ follows a chi-square distribution with 
	one degree of freedom. Thus, the critical value for a level $\alpha$ SDT is given by 
	$t_\alpha^{\beta,\gamma}=\zeta_{(1)}^{\gamma, \beta}(\theta_0) \chi_{1,\alpha}^2$, 
	where $\chi_{1,\alpha}^2$ is upper $\alpha^{\rm th}$ quantile of the distribution of $Z_1$.
	
	Now, it follows from Theorem \ref{THM:7asymp_power_one} that the distribution of SDT statistic 
	under the contaminated contiguous alternatives $F_{n,\epsilon,y}^P$ is the same as that of 
	$\zeta_{(1)}^{\gamma, \beta}(\theta_0) W_{1,\delta}$, where $W_{1,\delta}$ follows a non-central 
	chi-square distribution with one degree of freedom and non-centrality parameter 
	$\delta = \widetilde{\Delta}^2\Sigma_\beta(\theta_0)^{-1}$. 
	Let $G_{\chi_1^2(\delta)}(\cdot)$ denote the distribution function of $W_{1,\delta}$.
	Then, we have
	\begin{eqnarray}
	Power(\Delta, \epsilon) &=& P\left(\zeta_{(1)}^{\gamma, \beta}(\theta_0) W_{1,\delta} 
	>  t_\alpha^{\beta,\gamma} = \zeta_{(1)}^{\gamma, \beta}(\theta_0) \chi_{1,\alpha}^2\right) 
	\nonumber \\
	&=&  1 - G_{\chi_1^2(\delta)}(\chi_{1,\alpha}^2) \nonumber\\
	&=& \sum\limits_{v=0}^{\infty} ~ C_v^{\gamma, \beta}(\theta_0, \widetilde{\Delta}) 
	P\left(\chi_{r+2v}^2 > \chi_{1,\alpha}^2\right),\nonumber
	\label{EQ:7asymp_power_cont_null_1}
	\end{eqnarray}
	where $C_v^{\gamma, \beta}(\theta_0, \widetilde{\Delta}) $ has now the simpler form given by
	$$
	C_v^{\gamma, \beta}(\theta_0, t) = \frac{e^{-\frac{t^2}{2}\Sigma_\beta(\theta_0)^{-1}} t^{2v}}{v! 2^v}
	\Sigma_\beta(\theta_0)^{-v}, ~~~~~~ v =0, 1, \ldots.~~
	$$
	Therefore, we can differentiate it with respect to $t$ to get, for each $v \geq 0$,
	$$
	\frac{\partial}{\partial t}C_v^{\gamma, \beta}(\theta_0, t) 
	= \frac{e^{-\frac{t^2}{2}\Sigma_\beta(\theta_0)^{-1}}\left(2v t^{2v-1} - t^{2v+1}\Sigma_\beta(\theta_0)^{-1}\right)
	}{v! 2^v}
	\Sigma_\beta(\theta_0)^{-v}.
	$$
	Then, following Theorem \ref{THM:7PIF_simpleTest1}, 
	the power influence function of the proposed SDT, in this case, is given by 
	\begin{eqnarray}
	&& PIF(y; T_{\gamma,\lambda}^{(1)}, F_{\theta_0}) \nonumber\\
	&& = IF(y;U_\beta,F_{\theta_0}) e^{-\frac{\Delta^2}{2\Sigma_\beta(\theta_0)}} \left[\sum\limits_{v=0}^{\infty}~
	\frac{P\left(\chi_{r+2v}^2 > \chi_{1,\alpha}^2 \right)}{v! 2^v \Sigma_\beta(\theta_0)^{v} }
	\left(2v \Delta^{2v-1} - \frac{\Delta^{2v+1}}{\Sigma_\beta(\theta_0)}\right)\right].
	~~~~~\nonumber
	\label{EQ:7PIF_1_simpleTest1}
	\end{eqnarray} 
	Further, it also follows that the level influence function of the proposed SDT in this case is in fact zero 
	whenever $IF(y;U_\beta,F_{\theta_0}) $ is bounded, 
	because here $\frac{\partial}{\partial t}C_v^{\gamma, \beta}(\theta_0, t) =0$ at $t=0$.
	Therefore the proposed SDT with $\beta>0$ for any one-dimensional parameter will always 
	be robust with respect to its size and power.
	\qed
	\label{REM:7SDT0_PIF_1}
\end{remark}

\subsection{The Chi-Square Inflation Factor of the Test}
\label{SEC:Chi-square-inflation}

In this section we will further explore the robustness of the SDT in terms of 
the stability of its limiting distribution under general contamination. 
In contrast to the contiguous contamination considered in the preceding section, 
here we consider the fixed departure from our model density that is independent of the sample size $n$. 
Following \cite{Lindsay:1994}, let us consider the unknown true distribution to be $G$ that 
may or may not be in our model family and let the null hypothesis of interest be 
\begin{equation}
H_0 : T_\beta(G)=\theta_0.
\label{EQ:7null_hyp_func}
\end{equation}
Then our $S$-divergence test statistics for testing the above can be written as 
$\xi_{\gamma,\lambda}({\hat{\theta}_\beta}, {\theta_0}) 
= 2 n S_{(\gamma,\lambda)}(f_{\hat{\theta}_\beta}, f_{\theta_0}),$
where $\hat{\theta}_\beta = T_\beta(G_n)$ with $G_n$ being the empirical distribution function.
For simplicity, we restrict our attention to the case of scalar $\theta$ only; 
generalization to multivariate case can be made by a routine extension
although the interpretation is more difficult for the multivariate case.

\begin{theorem}
Consider the above set-up with a scalar parameter $\theta$ and 
let $g$ denote the density function  corresponding to $G$.
Then under the null hypothesis in (\ref{EQ:7null_hyp_func}),
\begin{equation}
\xi_n^{\gamma,\lambda}({\hat{\theta}_\beta}, {\theta_0}) \mathop{\rightarrow}^\mathcal{D} c(g) \chi_1^2,
\label{EQ:7Chi_infl_fact}
\end{equation}
where $c(g) = A_\gamma(\theta_0) V_\beta(g) J_\beta^{-2}(g)$ with 
$A_\gamma(\theta_0) = (1+\gamma) \int u_{\theta_0}^2 f_{\theta_0}^{1+\gamma},$
$$
J_\beta(g) = \int u_{\theta_0}^2 f_{\theta_0}^{1+\beta} 
+ \int (i_{\theta_0} - \beta u_{\theta_0}^2)(g-f_{\theta_0})f_{\theta_0}^\beta,
$$ with $i_{\theta}= - \nabla u_\theta$ and 
$V_\beta(g) = \int u_{\theta_0}^2 f_{\theta_0}^{2\beta}g - \left(\int u_{\theta_0}f_{\theta_0}^\beta g \right)^2.$
\label{THM:7Chi_infl_fact}
\end{theorem}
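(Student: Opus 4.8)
The plan is to linearise the statistic through a second order Taylor expansion of the $S$-divergence in its first argument and then feed in the \emph{misspecified} limit law of the minimum DPD estimator. First I would expand $S_{(\gamma,\lambda)}(f_\theta,f_{\theta_0})$ as a function of $\theta$ about $\theta=\theta_0$ and evaluate at $\theta=\hat{\theta}_\beta$, giving
\[
S_{(\gamma,\lambda)}(f_{\hat{\theta}_\beta},f_{\theta_0})
= S_{(\gamma,\lambda)}(f_{\theta_0},f_{\theta_0})
+ M_{\gamma,\lambda}(\theta_0)(\hat{\theta}_\beta-\theta_0)
+ \tfrac12 A_\gamma(\theta_0)(\hat{\theta}_\beta-\theta_0)^2
+ o\big((\hat{\theta}_\beta-\theta_0)^2\big).
\]
Both leading terms vanish: $S_{(\gamma,\lambda)}(f_{\theta_0},f_{\theta_0})=0$ because a divergence is zero on the diagonal, and $M_{\gamma,\lambda}(\theta_0)=0$ because $\theta\mapsto S_{(\gamma,\lambda)}(f_\theta,f_{\theta_0})$ is minimised at $\theta_0$; both facts were already recorded in the influence-function subsection and, being properties of the divergence between model densities, hold irrespective of $G$. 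Multiplying by $2n$ therefore collapses the statistic to
\[
\xi_n^{\gamma,\lambda}({\hat{\theta}_\beta}, {\theta_0})
= A_\gamma(\theta_0)\,\big[\sqrt{n}\,(\hat{\theta}_\beta-\theta_0)\big]^2 + o_P(1),
\]
exactly as in the $n\times o(\|\cdot\|^2)=o_P(1)$ step used in Theorem \ref{THM:7asymp_power_one}, so everything now hinges on the limiting distribution of $\sqrt{n}\,(\hat{\theta}_\beta-\theta_0)$ under $H_0$.

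The crux is to obtain that law when the data-generating $g$ need not lie in the model. Here $\hat{\theta}_\beta=T_\beta(G_n)$ is the $M$-estimator solving the empirical estimating equation $\int u_\theta f_\theta^{1+\beta} = \int u_\theta f_\theta^{\beta}\,dG_n$, while $\theta_0=T_\beta(G)$ is its population solution under the null. Writing the estimating function as $\Psi_\theta(x)=u_\theta(x)f_\theta(x)^{\beta}-\int u_\theta f_\theta^{1+\beta}$, the standard theory for $M$-estimation toward a possibly misspecified target gives
\[
\sqrt{n}\,(\hat{\theta}_\beta-\theta_0)\ \mathop{\rightarrow}^{\mathcal{D}}\ N\!\big(0,\ V_\beta(g)\,J_\beta^{-2}(g)\big),
\]
with sensitivity $J_\beta(g)=-E_G[\partial_\theta\Psi_\theta]\big|_{\theta_0}$ and variability $V_\beta(g)=\mathrm{Var}_G[\Psi_{\theta_0}]$. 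I would then verify by direct differentiation, using $\nabla f_\theta=u_\theta f_\theta$ and $i_\theta=-\nabla u_\theta$, the identity $\partial_\theta(u_\theta f_\theta^{\beta})=(\beta u_\theta^2-i_\theta)f_\theta^{\beta}$. Feeding this in and using the population identity $\int u_{\theta_0}f_{\theta_0}^{\beta}g=\int u_{\theta_0}f_{\theta_0}^{1+\beta}$ reduces $V_\beta(g)$ to $\int u_{\theta_0}^2 f_{\theta_0}^{2\beta}g-(\int u_{\theta_0}f_{\theta_0}^{\beta}g)^2$ and, after grouping the model and contamination contributions, reduces $J_\beta(g)$ to the stated mixed form $\int u_{\theta_0}^2 f_{\theta_0}^{1+\beta}+\int(i_{\theta_0}-\beta u_{\theta_0}^2)(g-f_{\theta_0})f_{\theta_0}^{\beta}$.

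Finally, applying the continuous mapping theorem to $x\mapsto A_\gamma(\theta_0)x^2$ turns the normal limit into $A_\gamma(\theta_0)V_\beta(g)J_\beta^{-2}(g)\,\chi_1^2=c(g)\chi_1^2$, and Slutsky's theorem absorbs the $o_P(1)$ remainder, yielding (\ref{EQ:7Chi_infl_fact}). I expect the genuine work to lie not in the Taylor step but in the second one: justifying the misspecified-model limit with the sandwich variance $V_\beta(g)J_\beta^{-2}(g)$ in place of the at-the-model variance $\Sigma_\beta(\theta_0)$ of Theorem \ref{THM:7asymp_null_one}, and checking that the two algebraically distinct expressions for $J_\beta(g)$ coincide. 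This needs the expansion of the estimating equation to be controlled uniformly so that the remainder is genuinely $o_P(1)$, and it needs the relevant moments of $u_{\theta_0}$, $i_{\theta_0}$ and powers of $f_{\theta_0}$ to be finite under $g$ — regularity that is underwritten by the Lehmann and Basu et al.~conditions together with the integrability of $g$ against these model quantities.
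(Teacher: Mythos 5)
Your proposal is correct and takes essentially the same route as the paper's own proof: a second-order Taylor expansion of $S_{(\gamma,\lambda)}(f_\theta,f_{\theta_0})$ about $\theta_0$ (with the constant and linear terms vanishing) reduces the statistic to $A_\gamma(\theta_0)\left[\sqrt{n}(\hat{\theta}_\beta-\theta_0)\right]^2+o_P(1)$, after which the sandwich-variance limit $N\left(0,V_\beta(g)J_\beta^{-2}(g)\right)$ for the MDPDE under the misspecified $g$ gives $c(g)\chi_1^2$. The paper merely compresses both steps into citations — the Taylor step to the proof of Theorem \ref{THM:7asymp_null_one} and the misspecified-model normality to the MDPDE theory of \cite{Basu/etc:2011} — whereas you additionally verify the algebraic identification of $J_\beta(g)$ and $V_\beta(g)$ with the standard $M$-estimation sensitivity and variability, which is a useful but not different argument.
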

\begin{proof}	
Following the same proof as in Theorem \ref{THM:7asymp_null_one},
we have that under the above null hypothesis, the asymptotic distribution of the test statistics 
$\xi_n^{\gamma,\lambda}({\hat{\theta}_\beta}, {\theta_0})$ is the same as that of 
$A_\gamma(\theta_0) \left[\sqrt{n} ({\hat{\theta}_\beta} - {\theta_0})\right]^2$. 
However since $T_\beta(G)=\theta_0$, it follows that under $g$, 
$\sqrt{n} ({\hat{\theta}_\beta} - {\theta_0})$ is asymptotically univariate normal with mean zero 
and variance $V_\beta(g) J_\beta^{-2}(g)$ so that the asymptotic distribution of 
$A_\gamma(\theta_0) \left[\sqrt{n} ({\hat{\theta}_\beta} - {\theta_0})\right]^2$ is nothing but $c(g) \chi_1^2$.
\end{proof}

As in \cite{Lindsay:1994} for the disparity difference test, 
here also we refer $c(g)$ as the {\it Chi-Square Inflation Factor} 
having  exactly the similar interpretations. 
However, the confidence interval of $T_{\gamma,\lambda}^{(1)}$ based on 
Theorem \ref{THM:7Chi_infl_fact} will only be correct whenever $c(g)= c(f_{\theta_0})$; 
it will be conservative provided $c(g)< c(f_{\theta_0})$ and liberal otherwise. 
Clearly, $c(g)$ is independent of the parameter $\lambda$ and depends on the true density $g$ only 
through the parameter $\beta$ of the MDPDE. 
Thus the stability of this SDT and corresponding confidence interval under the model misspecification 
solely depends on the robustness of the MDPDE used in the test statistics.

As an illustration, we study in detail, 
the properties of this chi-square inflation factor $c(g)$  for the particular case, 
$g = g_\epsilon = (1-\epsilon)f_{\theta_0} + \epsilon \wedge_y$. 
In this case, the value of the quantity $\frac{\partial}{\partial \epsilon}c(g_\epsilon) \big|_{\epsilon=0}$ 
gives us the extent of stability of the test statistics under small contaminations 
with similar interpretation as that of the influence function. 
Whenever its value is bounded in $y$, we get the stable test statistics under small contaminations and 
the magnitude of its supremum over all $y$ gives the extent of stability. 
The following theorem gives an explicit form of this quantity; 
the proof involves routine differentiation and is hence omitted.

\begin{theorem}
\label{THM:7Chi_infl_fact_slope}
Consider the above set-up with scalar parameter $\theta$. Then we have,
\begin{eqnarray}
&& \frac{\partial}{\partial \epsilon}c(g_\epsilon) \big|_{\epsilon=0} 
= (1+\gamma) \frac{M_\gamma}{M_\beta^3} \left[ M_\beta u_{\theta_0}^2(y) f_{\theta_0}^{2\beta}(y) 
- 2f_{\theta_0}^\beta(y) \right. \nonumber\\
&& ~~ \times \left. \left\{ N_\beta M_\beta u_{\theta_0}(y) (M_{2\beta} - N_\beta^2)(i_{\theta_0}(y)
-\beta u_{\theta_0}^2(y)) \right\} + \Upsilon_\beta (\theta_0) \right],
\end{eqnarray}
where $M_\beta = \int u_{\theta_0}^2 f_{\theta_0}^{1+\beta}$, 
$N_\beta = \int u_{\theta_0} f_{\theta_0}^{1+\beta}$ and 
$$
\Upsilon_\beta (\theta_0)= 2(M_{2\beta} - N_\beta^2)(\int i_{\theta_0}f_{\theta_0}^{1+\beta} - (1+\beta)M_\beta)
$$
are independent of the contamination point $y$. 
\qed
\end{theorem}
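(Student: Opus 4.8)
The plan is to differentiate the closed form $c(g) = A_\gamma(\theta_0) V_\beta(g) J_\beta^{-2}(g)$ supplied by Theorem \ref{THM:7Chi_infl_fact} along the contamination path $g_\epsilon = (1-\epsilon) f_{\theta_0} + \epsilon \wedge_y$, viewing $c(g_\epsilon)$ as an ordinary function of the scalar $\epsilon$. Since $A_\gamma(\theta_0) = (1+\gamma)\int u_{\theta_0}^2 f_{\theta_0}^{1+\gamma} = (1+\gamma) M_\gamma$ does not involve $g$, it survives the differentiation as a multiplicative constant, and the product/quotient rule immediately gives, with $g_0 = f_{\theta_0}$,
\[
\frac{\partial}{\partial\epsilon} c(g_\epsilon)\Big|_{\epsilon=0} = A_\gamma(\theta_0)\left[ J_\beta^{-2}(g_0)\,\frac{\partial V_\beta(g_\epsilon)}{\partial\epsilon}\Big|_{\epsilon=0} - 2\, V_\beta(g_0)\, J_\beta^{-3}(g_0)\,\frac{\partial J_\beta(g_\epsilon)}{\partial\epsilon}\Big|_{\epsilon=0}\right].
\]
So the whole computation reduces to evaluating the two functionals $V_\beta,J_\beta$ and their $\epsilon$-derivatives at $\epsilon=0$, and then clearing the common denominator.

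First I would record the base values at $\epsilon=0$. There $g_\epsilon$ collapses to $f_{\theta_0}$, and the contamination term in $J_\beta(g)$ vanishes because it carries the factor $(g-f_{\theta_0})$; hence $J_\beta(g_0) = \int u_{\theta_0}^2 f_{\theta_0}^{1+\beta} = M_\beta$ and $V_\beta(g_0) = \int u_{\theta_0}^2 f_{\theta_0}^{1+2\beta} - \left(\int u_{\theta_0} f_{\theta_0}^{1+\beta}\right)^2 = M_{2\beta} - N_\beta^2$. Substituting these into the displayed expression produces the overall prefactor $(1+\gamma) M_\gamma / M_\beta^3$ recorded in the statement.

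Next I would differentiate each functional. Both $V_\beta$ and $J_\beta$ depend on $g_\epsilon$ only through integrals of the form $\int h\, g_\epsilon = (1-\epsilon)\int h f_{\theta_0} + \epsilon\, h(y)$, so that $\frac{\partial}{\partial\epsilon}\int h\, g_\epsilon = h(y) - \int h f_{\theta_0}$. Applying this with $h = u_{\theta_0}^2 f_{\theta_0}^{2\beta}$, and — after a single chain-rule step for the squared term — with $h = u_{\theta_0} f_{\theta_0}^{\beta}$, yields $\frac{\partial V_\beta}{\partial\epsilon}\big|_{0}$ as the $y$-dependent piece $u_{\theta_0}^2(y) f_{\theta_0}^{2\beta}(y) - 2 N_\beta u_{\theta_0}(y) f_{\theta_0}^{\beta}(y)$ plus a $y$-free remainder built from $M_{2\beta}$ and $N_\beta^2$. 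For $J_\beta$ only the linear contamination term is differentiated, giving $\frac{\partial J_\beta}{\partial\epsilon}\big|_{0} = \left(i_{\theta_0}(y) - \beta u_{\theta_0}^2(y)\right) f_{\theta_0}^{\beta}(y) - \left(\int i_{\theta_0} f_{\theta_0}^{1+\beta} - \beta M_\beta\right)$, using $i_{\theta_0} = -\nabla u_{\theta_0}$.

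Finally I would insert these four quantities into the master formula, expand, and sort the terms by their dependence on the contamination point $y$: the terms carrying $u_{\theta_0}(y)$, $f_{\theta_0}(y)$ and $i_{\theta_0}(y)$ assemble into the bracketed $y$-dependent part of the stated slope, while every term that remains with all integrals taken against $f_{\theta_0}$ collects into the contamination-free constant $\Upsilon_\beta(\theta_0) = 2(M_{2\beta} - N_\beta^2)\left(\int i_{\theta_0} f_{\theta_0}^{1+\beta} - (1+\beta) M_\beta\right)$. No step poses an analytic obstacle; the work is exactly the routine differentiation the authors invoke. The one delicate point, and the place where care is genuinely needed, is this last bookkeeping: one must verify that the $y$-free leftovers coming from $M_\beta\,\partial V_\beta/\partial\epsilon$ and from $-2(M_{2\beta}-N_\beta^2)\,\partial J_\beta/\partial\epsilon$ recombine precisely into $\Upsilon_\beta(\theta_0)$ (this is where the coefficient $(1+\beta)$, rather than $\beta$, multiplying $M_\beta$ arises), since a sign or index slip in tracking the many $M$- and $N$-terms is the only realistic source of error.
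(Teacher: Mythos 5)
Your route is exactly the ``routine differentiation'' that the paper invokes when it omits its own proof, and your intermediate ingredients are all correct: at $\epsilon=0$ one has $J_\beta(g_0)=M_\beta$ and $V_\beta(g_0)=M_{2\beta}-N_\beta^2$, while
\begin{align*}
\left.\frac{\partial V_\beta(g_\epsilon)}{\partial\epsilon}\right|_{\epsilon=0} &= u_{\theta_0}^2(y)f_{\theta_0}^{2\beta}(y)-2N_\beta u_{\theta_0}(y)f_{\theta_0}^{\beta}(y)-M_{2\beta}+2N_\beta^2,\\
\left.\frac{\partial J_\beta(g_\epsilon)}{\partial\epsilon}\right|_{\epsilon=0} &= \left(i_{\theta_0}(y)-\beta u_{\theta_0}^2(y)\right)f_{\theta_0}^{\beta}(y)-\int i_{\theta_0}f_{\theta_0}^{1+\beta}+\beta M_\beta,
\end{align*}
so the quotient rule reduces everything to $\frac{(1+\gamma)M_\gamma}{M_\beta^3}\left[M_\beta\,\partial_\epsilon V_\beta-2(M_{2\beta}-N_\beta^2)\,\partial_\epsilon J_\beta\right]_{\epsilon=0}$. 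The $y$-dependent terms do assemble into $M_\beta u_{\theta_0}^2(y)f_{\theta_0}^{2\beta}(y)-2f_{\theta_0}^\beta(y)\left\{N_\beta M_\beta u_{\theta_0}(y)+(M_{2\beta}-N_\beta^2)(i_{\theta_0}(y)-\beta u_{\theta_0}^2(y))\right\}$, which you implicitly (and correctly) read into the theorem despite the missing ``$+$'' inside the printed braces.

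However, the single step you decline to carry out --- verifying that the $y$-free leftovers ``recombine precisely into $\Upsilon_\beta(\theta_0)$'' --- is exactly where a proof of the statement as printed breaks down, so deferring it is a genuine gap and not a formality. Collecting the constant terms gives
\begin{equation*}
M_\beta\left(-M_{2\beta}+2N_\beta^2\right)+2(M_{2\beta}-N_\beta^2)\left(\int i_{\theta_0}f_{\theta_0}^{1+\beta}-\beta M_\beta\right)
=\Upsilon_\beta(\theta_0)+M_\beta M_{2\beta},
\end{equation*}
i.e.\ the coefficient of $M_\beta M_{2\beta}$ comes out as $-(1+2\beta)$ rather than the $-2(1+\beta)$ implied by the printed $\Upsilon_\beta(\theta_0)$; the two differ by $M_\beta M_{2\beta}$, which does not vanish. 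A model check confirms your machinery and not the printed constant: for the normal mean with $\beta=\gamma=0$ the definitions give $c(g)=\mathrm{Var}_g(X)/\sigma^2$ exactly, so $\partial_\epsilon c(g_\epsilon)|_{\epsilon=0}=(y-\theta_0)^2/\sigma^2-1$; the constant $\Upsilon_\beta(\theta_0)+M_\beta M_{2\beta}$ reproduces this, while the printed $\Upsilon_\beta(\theta_0)$ yields $(y-\theta_0)^2/\sigma^2-2$. So your plan is the right one (and evidently the paper's intended one), but a complete write-up must actually perform this last collection of terms, and when you do, you will end with the corrected constant; asserting that the bookkeeping lands on the theorem's $\Upsilon_\beta(\theta_0)$ --- and attributing the $(1+\beta)$ factor to it --- is false as stated.
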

%

\section{An Illustration: Testing Normal Mean with known variance}
\label{SEC:7example_simple}

Let us now consider the most popular but simple hypothesis testing problem 
regarding the mean $\theta$ of 
an univariate normal with known variance $\sigma^2$; 
so $f_\theta$ is the $N(\theta, \sigma^2)$ density with $\theta$ being the parameter of interest.
Based on a sample $X_1, \ldots, X_n$ of size $n$ from this population, 
we want to test the simple hypothesis 
$H_0: \theta=\theta_0$ against the omnibus alternative. Let $\hat{\theta}_\beta$ denote the MDPDE 
of $\theta$ under this set-up. Then, using the form of the normal density, 
the SDT has the simple form given by 
$$
\xi_n^{\gamma,\lambda}({\hat{\theta}_\beta}, {\theta_0})  = 2 n \kappa_\gamma \frac{1+\gamma}{AB} 
\left[1 - e^{-\frac{AB(\hat{\theta}_\beta -\theta_0)^2}{2(1+\gamma)\sigma^2}}\right], ~~ A, B \neq 0,
$$
where $\kappa_\gamma=(2\pi)^{-\frac{\gamma}{2}} \sigma^{-\gamma} (1+\gamma)^{-\frac{1}{2}}$.
Whenever one of $A$ or $B$ is zero, the test statistic has the limiting form given by 
$$
\xi_n^{\gamma,\lambda}({\hat{\theta}_\beta}, {\theta_0})  
=\frac{ n \kappa_\gamma }{\sigma^2}(\hat{\theta}_\beta -\theta_0)^2, ~~ A = 0~ \mbox{or} ~ B = 0.
$$
Further note that at $\gamma=\lambda=\beta=0$, we have $\kappa_0=1$, $B=0$ and 
$\hat{\theta}_0=\bar{X} = \hat{\theta}_{MLE}$, the MLE of $\theta$;
thus the SDT statistic becomes 
$$\xi_n^{0,0}({\hat{\theta}_0}, {\theta_0})  = \frac{n}{\sigma^2}(\bar{X} -\theta_0)^2,$$ 
which is the ordinary likelihood ratio test statistic for the problem under consideration.

Now, the asymptotic null distribution of the above SDT follows from Theorem \ref{THM:7asymp_null_one}; 
the normal density family can be seen to satisfy the required assumptions.
Recall from \cite{Basu/etc:2011} that the asymptotic distribution of 
$\sqrt{n}(\hat{\theta}_\beta - \theta_0)$ is asymptotically normal with mean $0$ and variance 
$\upsilon_\beta = \frac{(1+\beta)^3}{(1+2\beta)^{3/2}}\sigma^2$ under $H_0: \theta =\theta_0$.
Then the asymptotic null distribution of the SDT statistics 
$\xi_n^{\gamma,\lambda}({\hat{\theta}_\beta}, {\theta_0}) $ is the same as 
that of $\zeta_1^{\gamma,\beta} Z_1$, where $Z_1$ follows a $\chi^2$ distribution with one degree of freedom  
and $\zeta_1^{\gamma,\beta} = \frac{\kappa_\gamma\upsilon_\beta}{\sigma^2}$; 
in other words, 
$$
\frac{\xi_n^{\gamma,\lambda}({\hat{\theta}_\beta}, {\theta_0})}{\zeta_1^{\gamma, \beta}}  
\mathop{\rightarrow}^\mathcal{D} \chi^2_1, ~~~~~~\mbox{as }~~n\rightarrow\infty.
$$
In particular when $\gamma=\beta=0$, we have $\zeta_1^{\gamma,\beta}=1$ as expected.

Further, we can compute an approximation to the asymptotic power of the above test 
at any alternative point $\theta^*$ using Theorem \ref{THM:7aprox_power_one}. 
To determine the power in the spirit of the contiguous alternatives, we choose the alternatives 
$H_1:  \theta^* = \theta_0 + \frac{\Delta}{\sqrt{n}}$ 
and plot the corresponding approximate power in the Figure \ref{FIG:7Approx_power_comparison}
for $\beta=\gamma$, $\sigma=1$, $\theta_0=0$, $\Delta=\sqrt{10}$ 
and different sample sizes $n = 10, 50, 100, 300$. 
It is clear from the figures that the approximate power of the proposed SDT is almost the same 
for different values of $\lambda$ whenever the sample sizes are large enough or the values of the parameters 
$\gamma=\beta$ is close to one; note that the second case makes the the $S$-divergence measure 
to be almost independent of $\lambda$ (it is completely independent at $\gamma=1$). 
However, for the small sample sizes with smaller values of $\gamma=\beta$, 
the approximate powers depend on $\lambda$ significantly.

\begin{figure}
\centering
\subfloat[Appr., $n=30$]{
\includegraphics[width=0.4\textwidth,height=0.19\textheight] {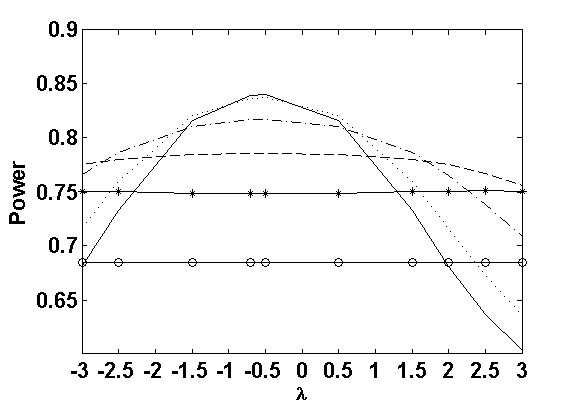}
\label{fig:apower_30}}
~ 
\subfloat[Sim., $n=30$]{
\includegraphics[width=0.4\textwidth,height=0.19\textheight] {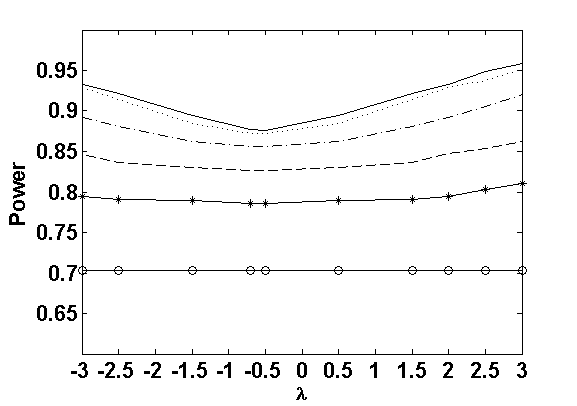}
\label{fig:Spower_30}}
\\ 
\subfloat[Appr., $n=50$]{
\includegraphics[width=0.4\textwidth,height=0.19\textheight] {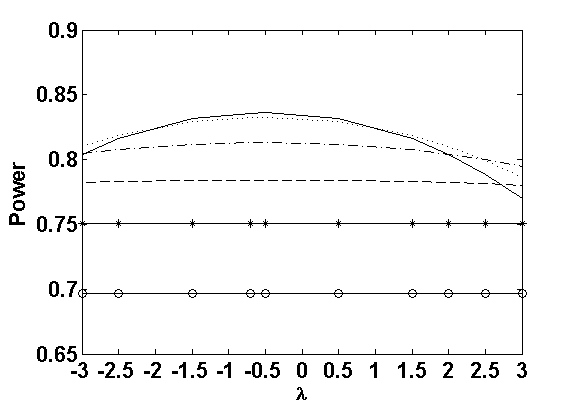}
\label{fig:apower_50}}
~ 
\subfloat[Sim., $n=50$]{
\includegraphics[width=0.4\textwidth,height=0.19\textheight] {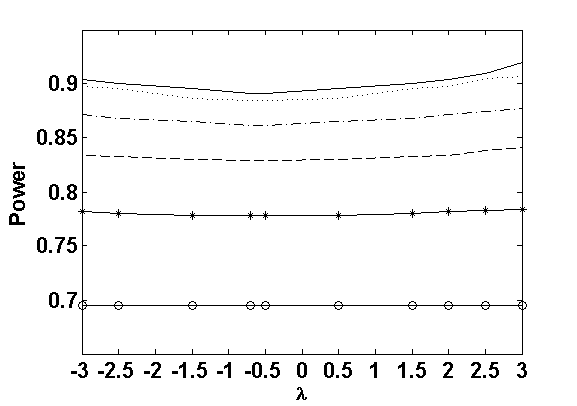}
\label{fig:Spower_50}}
\\ 
\subfloat[Appr., $n=100$]{
\includegraphics[width=0.4\textwidth,height=0.19\textheight] {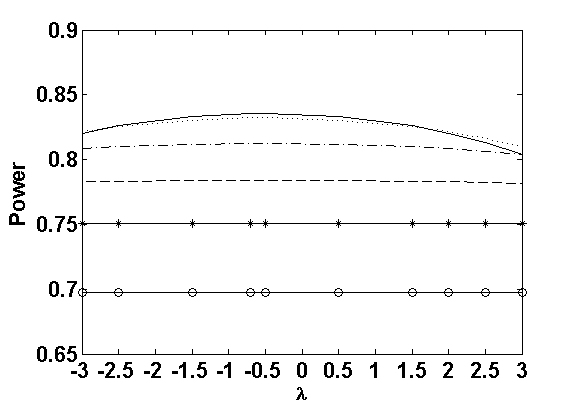}
\label{fig:apower_100}}
~ 
\subfloat[Sim., $n=100$]{
\includegraphics[width=0.4\textwidth,height=0.19\textheight] {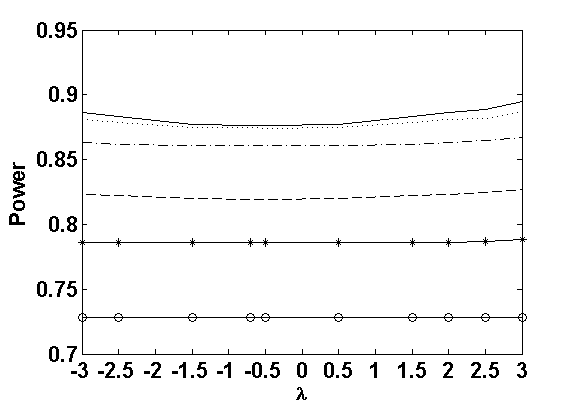}
\label{fig:Spower_100}}
\\ 
\subfloat[Appr., $n=300$]{
\includegraphics[width=0.4\textwidth,height=0.19\textheight] {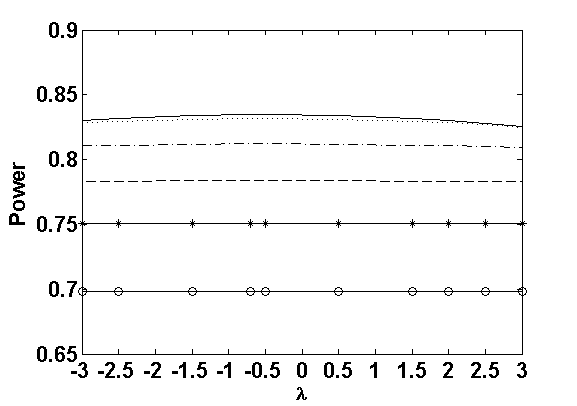}
\label{fig:apower_300}}
~
\subfloat[Sim., $n=300$]{
\includegraphics[width=0.4\textwidth,height=0.19\textheight] {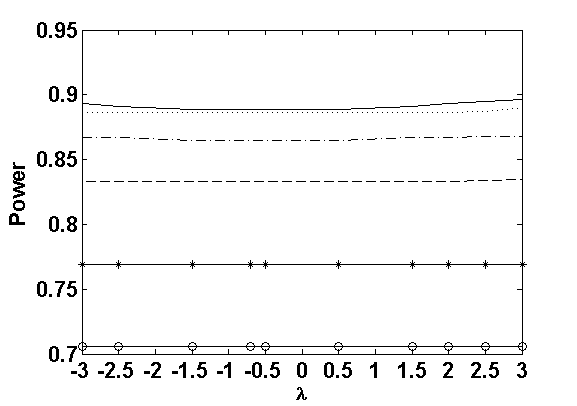}
\label{fig:Spower_300}}
\caption[Plot of approximate power and simulated power of the $S$-divergence based test for normal mean
against contiguous alternatives over the parameter $\lambda$ for different sample sizes $n$ and 
different $\beta=\gamma$]{\footnotesize Plot of approximate power (Appr.) and simulated power (Sim.) of 
the SDT for normal mean against contiguous alternatives over the parameter $\lambda$ for 
different sample sizes $n$
(Solid line: $\beta=\gamma = 0$, Dotted line: $\beta=\gamma = 0.1$, Dash-Dot line: $\beta=\gamma = 0.3$, 
Dashed line: $\beta=\gamma = 0.5$, Marker $*$: $\beta=\gamma = 0.7$, Marker $o$: $\beta=\gamma = 1$).}
 \label{FIG:7Approx_power_comparison}
\end{figure}

To explore how good this approximation is, we also perform a simulation study 
for exactly the same set-up of the normal model as above with 1000 replications 
and present the simulated powers in the same Figure \ref{FIG:7Approx_power_comparison}. 
Interestingly, the simulated powers turn out to be almost the same for different values of $\lambda$ ---
contradicting the corresponding results for approximate power in case of small samples sizes.
This illustrates that the power approximation derived above may not give 
the true picture about the power of $S$-divergence based test for such small sample sizes. 
However, in case of large sample sizes the above power approximation works very well 
at the alternatives considered by us producing power values very close to the simulated power 
for any $\lambda$ and $\beta=\gamma$.
Note that, this fact is not very surprising as the above power approximation is 
only asymptotic in nature.  
Further, the simulated power at any $\gamma=\beta$ values are also very much close to that obtained 
through the expression of the asymptotic power of SDT against contiguous alternative hypotheses in 
Corollary \ref{COR:7cont_power_one}; the corresponding values are presented in 
Table \ref{TAB:7cont_power_norm_mu}. 
In fact, it follows from Corollary \ref{COR:7cont_power_one} that the asymptotic distribution 
of the SDT statistic under contiguous alternative hypotheses $\theta_n$ is the same as that of 
$\zeta_1^{\gamma,\beta} W_{1,\delta}$, where $W_{1,\delta}$ $Z_1$ follows a non-central 
$\chi^2$ distribution with one degrees of freedom and non-centrality parameter $\delta=\Delta^2/\upsilon_\beta$; 
hence its  asymptotic power under contiguous alternatives $\theta_n$ turns out to be 
(see Remark \ref{REM:7SDT0_PIF_1})
$$
P_0 = P(\zeta_1^{\gamma,\beta} W_{1,\delta} > \zeta_1^{\gamma,\beta} \chi_{1,\alpha}^2)
= 1 - G_{\chi_1^2(\delta)}(\chi_{1,\alpha}^2),
$$
where $G_{\chi_1^2(\delta)}$ denotes the distribution function of $W_{1,\delta}$
and $\chi_{1,\alpha}^2$ is the upper $\alpha^{\rm th}$ quantile of the central chi-square distribution 
with one degree of freedom.
Note that the asymptotic power against the contiguous alternative hypotheses are independent of the parameter
$\lambda$ and decreases slightly as the parameter $\gamma=\beta$ increases from $0$ to $1$. 
This shows that there is a loss in efficiency of the SDT with respect to the LRT at the pure data;
however the loss is not as  significant to offset its strong robustness properties. 

\begin{table}[h]
\centering 
\caption{Contiguous (asymptotic) power of SDT for $\theta_0=0$, $\Delta=\sqrt{10}$. }
\begin{tabular}{ l| r r r r r r } \hline
$\beta=\gamma$	& 0			& 0.1		& 0.3		& 0.5		& 0.7		& 1	\\	\hline
\noalign{\smallskip}
Power 			& 0.89	& 0.88	& 0.86	& 0.83	& 0.79	& 0.72 \\
\noalign{\smallskip}\hline
\end{tabular}
\label{TAB:7cont_power_norm_mu}
\end{table}

Now, we study the extend of robustness of the SDT in the present scenario.
We first consider the asymptotic results proved in Section \ref{SEC:7sample1_simpleTest_robust}, 
where we have seen that  first order influence function of the SDT statistic and any order 
influence function of its level will be zero under any model. Therefore, we consider the 
second order influence function of the SDT statistic; 
under the normal model it can be simplified as 
$$
IF_2(y; T_{\gamma,\lambda}^{(1)}, F_{\theta_0}) 
= \frac{(1+\beta)^{3/2}}{(\sqrt{2\pi}\sigma)^{\beta+\gamma}\sigma^4\sqrt{1+\gamma}} 
(y -\theta_0)^2 e^{-\frac{\beta(y-\theta_0)^2}{\sigma^2}}.
$$
Note that the above influence function is bounded whenever $\beta>0$ and unbounded at $\beta=0$.
Thus the SDT statistic should be always robust if we use a robust MDPDE with $\beta>0$; 
but the use of the non-robust MLE will make the test statistic non-robust also.
Further the maximum possible value of the above influence function can be seen to be 
independent of parameter $\lambda$ and decreases as the values of $\gamma=\beta$ increases. 
Thus, the extent of the robustness of the SDT increases with the values of $\gamma=\beta$ 
but remains unaffected as a function of $\lambda$. Figure \ref{FIG:7IF_normal_mu}\subref{fig:IF_test}
show the influence function of SDT for $\theta_0=0$ and $\sigma=1$.

\begin{figure}[h]
	\centering
	\subfloat[IF of Test Statistics]{
		\includegraphics[width=0.49\textwidth, height=0.3\textwidth] {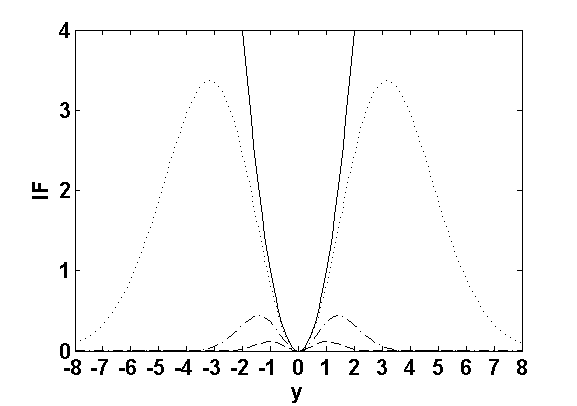}
		\label{fig:IF_test}}
	~ 
	\subfloat[Power IF]{
		\includegraphics[width=0.49\textwidth,  height=0.3\textwidth] {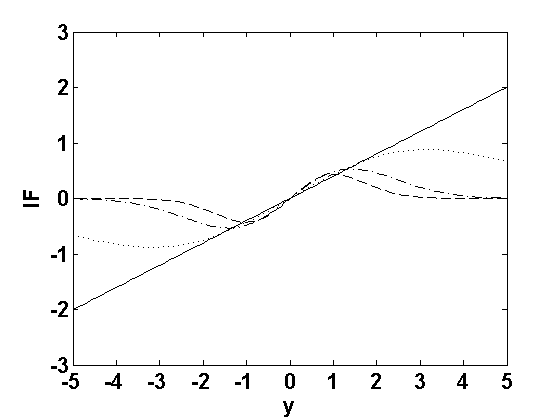}
		\label{fig:IF_power}}
	\caption{Influence functions of the $S$-divergence based test for normal mean with known $\sigma$ over
		different values of $\beta=\gamma$ 
		(Solid line: $\beta=\gamma = 0$, Dotted line: $\beta=\gamma = 0.1$, Dash-Dot line: $\beta=\gamma = 0.5$, 
		Dashed line: $\beta=\gamma = 1$).}
	\label{FIG:7IF_normal_mu}
\end{figure}

 We can also compute the power influence function of the SDT 
 numerically for the normal model; one particular case with $\theta_0=0$, $\sigma=1$ and $\Delta=1$
 is shown in Figure \ref{FIG:7IF_normal_mu}\subref{fig:IF_power} for the $5\%$ level of significance. 
From the robustness perspective, the inference derived from this power influence function 
is again similar to that from the IF of the SDT statistic in Figure \ref{FIG:7IF_normal_mu}\subref{fig:IF_test}.

Finally to see the small sample robustness properties of the SDT and 
compare them with the nature of the IFs derived above,  
we have undertaken a simulation study based on $1000$ replications
with contaminated samples of several types. For the simulation purpose, 
we have assumed $\theta_0=0$ and $\sigma=1$. The empirical size and power of SDT 
under some interesting contamination scenarios are presented in 
Figures \ref{FIG:7cont_size_normal_mu}--\ref{FIG:7contamination_power_exact_normal_mu}.

In Figure \ref{FIG:7cont_size_normal_mu} we have presented the empirical size of the SDT 
for sample size $n=50$ at three different contamination levels ($\epsilon=0, 0.05, 0.10$)
when the data generating distribution is $(1-\epsilon)N(0,1) + \epsilon N(1,1)$.
To give a description of the full range of the observed sizes 
which may not be fully discernible from the figure, we present the actual values 
of the sizes at some specific ($\lambda, \gamma=\beta$) combinations in Table \ref{TAB:7sizes}.   
Clearly, it follows that the size of the SDT remains more stable 
and closer to the nominal level of 5\% 
for the larger values of $\gamma=\beta$ compared to its smaller values like 0 or 0.1. 
For small $\gamma=\beta$ the empirical size of the SDT 
is seen to somewhat depend on the values of $\lambda$;
the  $\lambda$ values closer to zero yields relatively more stable size
closer to the nominal level of the test.
Indeed, there are many members of the class of SDT 
that give more robust size compared to the DPD based test 
for testing the simple hypothesis under consideration.

\begin{figure}[h]
	\centering
	\subfloat[$0\%$ Cont.]{
		\includegraphics[width=0.5\textwidth] {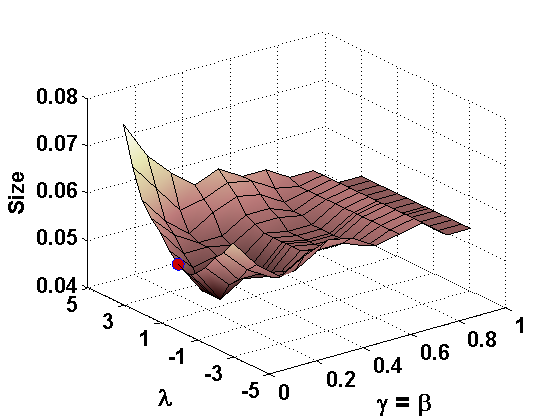}
		\label{fig:size_50_0}}
	\\ 
	\subfloat[$5\%$ Cont.]{
		\includegraphics[width=0.5\textwidth] {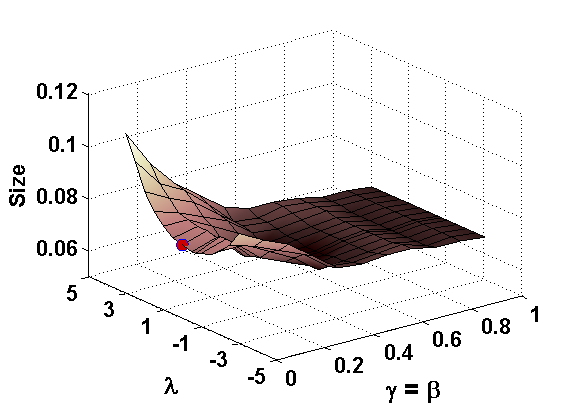}
		\label{fig:size_50_05}}
	~ 
	\subfloat[$10\%$ Cont.]{
		\includegraphics[width=0.5\textwidth] {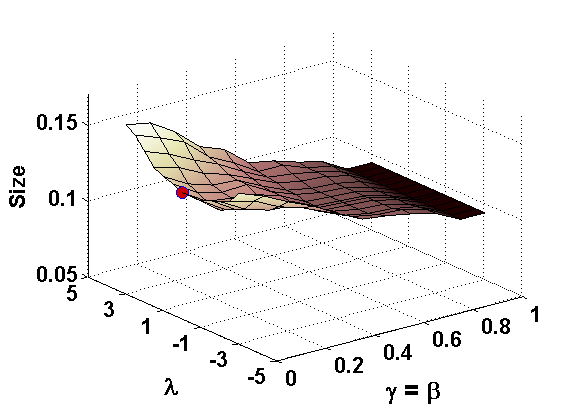}
		\label{fig:size_50_1}}
	\caption{Empirical size of the SDT for sample size $n=50$ and different contamination proportions; the contaminating distribution is $N(1,1)$}
	\label{FIG:7cont_size_normal_mu}
\end{figure}

\begin{table}[h]
	\centering 
	\caption{Some particular values of the empirical size of the SDT as plotted in Figure \ref{FIG:7cont_size_normal_mu}}
	\begin{tabular}{ l l| r r r r r r r} \hline \noalign{\smallskip}
		&	& \multicolumn{7}{c}{$\lambda$}	\\	\hline
		\noalign{\smallskip}
		$\gamma=\beta$	&	$\epsilon$	&	$-3$	&	$-1$	&	$-0.5$	&	0	&	0.5	&	1	&	3	\\	\hline
		\noalign{\smallskip}
		0	&	0	&	0.064	&	0.054	&	0.053	&	0.054	&	0.056	&	0.059	&	0.078	\\	
		&	0.05	&	0.092	&	0.078	&	0.077	&	0.078	&	0.079	&	0.082	&	0.111	\\	
		&	0.10	&	0.149	&	0.133	&	0.133	&	0.133	&	0.136	&	0.138	&	0.161	\\	\hline\noalign{\smallskip}
		0.1	&	0.00	&	0.058	&	0.047	&	0.047	&	0.047	&	0.051	&	0.053	&	0.070	\\	
		&	0.05	&	0.087	&	0.081	&	0.080	&	0.081	&	0.081	&	0.082	&	0.097	\\	
		&	0.1	&	0.143	&	0.123	&	0.121	&	0.123	&	0.126	&	0.129	&	0.158	\\	\hline\noalign{\smallskip}
		0.3	&	0	&	0.057	&	0.049	&	0.049	&	0.049	&	0.050	&	0.052	&	0.061	\\	
		&	0.05	&	0.076	&	0.069	&	0.069	&	0.069	&	0.069	&	0.072	&	0.080	\\	
		&	0.10	&	0.128	&	0.119	&	0.119	&	0.119	&	0.123	&	0.123	&	0.134	\\	\hline\noalign{\smallskip}
		0.5	&	0	&	0.058	&	0.053	&	0.053	&	0.053	&	0.053	&	0.055	&	0.060	\\	
		&	0.05	&	0.068	&	0.065	&	0.065	&	0.065	&	0.066	&	0.066	&	0.070	\\	
		&	0.10	&	0.111	&	0.106	&	0.106	&	0.106	&	0.108	&	0.108	&	0.115	\\	\hline\noalign{\smallskip}
		1	&	0	&	0.053	&	0.053	&	0.053	&	0.053	&	0.053	&	0.053	&	0.053	\\	
		&	0.05	&	0.066	&	0.066	&	0.066	&	0.066	&	0.066	&	0.066	&	0.066	\\	
		&	0.10	&	0.094	&	0.094	&	0.094	&	0.094	&	0.094	&	0.094	&	0.094	\\	
		\noalign{\smallskip}\hline
	\end{tabular}
	\label{TAB:7sizes}
\end{table}

In Figure \ref{FIG:7contamination_power_contig_normal_mu}, 
the power of the SDTs are presented for the contiguous alternative hypotheses
$H_{1n} : \theta = \frac{\Delta}{\sqrt{n}}$ with $\delta=\sqrt{10}$ under 10\% contamination.
In Figures \ref{fig:power_cont_30_m1_1} and \ref{fig:power_cont_50_m1_1},
the true distribution is $0.9 N(\frac{\Delta}{\sqrt{n}}, 1) + 0.1 N(-1,1)$
and the power are calculated at sample sizes $30$ and 50 respectively.
On the other hand, in Figures \ref{fig:power_cont_30_m2_1} and \ref{fig:power_cont_50_m2_1}
the true distribution is  $0.9 N(\frac{\Delta}{\sqrt{n}}, 1) + 0.1 N(-2,1)$
and the powers are again computed at sample sizes 30 and 50 respectively.
The powers are practically constant over $\lambda$, 
although there is some slight variation over $\lambda$ for small $\gamma=\beta$ ($\leq 0.2$).
Also, the SDT with larger values of $\gamma=\beta$ has quite high and stable power
for any contamination scenarios compared to the LRT and 
other SDT with smaller $\gamma=\beta$ values.

Note that, although we have presented the results for only one particular value of $\Delta=\sqrt{10}$,
we have done the power calculation for several other values of $\Delta$ ranging from $\sqrt{8}$ to $\sqrt{12}$.
In general, the overall representation is similar to 
Figure \ref{FIG:7contamination_power_contig_normal_mu},  
but the actual power increases slightly with $\Delta$.

In Figure \ref{FIG:7contamination_power_exact_normal_mu}, we present the power of 
the SDT against the fixed alternative $H_1: \theta =1$ for sample sizes $30$ (in the first column)
and $50$ (in the second column) under $20\%$ contamination. 
We have considered three contamination distributions (in three rows), 
namely $N(-2, 1)$, $N(-3, 1)$ and $N(-4, 1)$. Clearly, for all the contamination scenarios 
the power of the SDTs against the fixed alternative are almost one for $\gamma=\beta\leq 0.3$.
However, the power of the SDTs with smaller values of $\beta=\gamma$ (including LRT)
decreases from around 0.72 to 0.37 as the contamination distribution (with means $-2$ to $-4$)
moves away from the true distribution ($\theta=1$).

\begin{figure}[h]
\centering
\subfloat[$(30, N(-1,1))$]{
\includegraphics[width=0.5\textwidth] {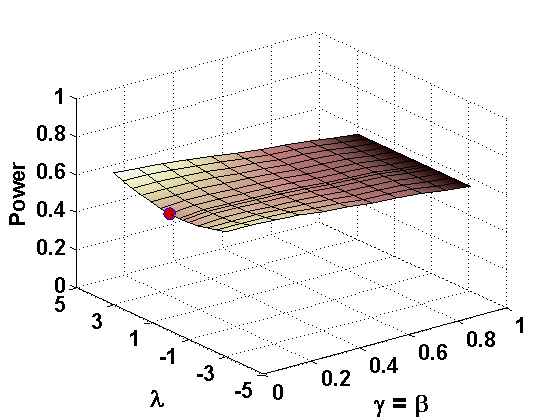}
\label{fig:power_cont_30_m1_1}}
~ 
\subfloat[$(30, N(-2,1))$]{
\includegraphics[width=0.5\textwidth] {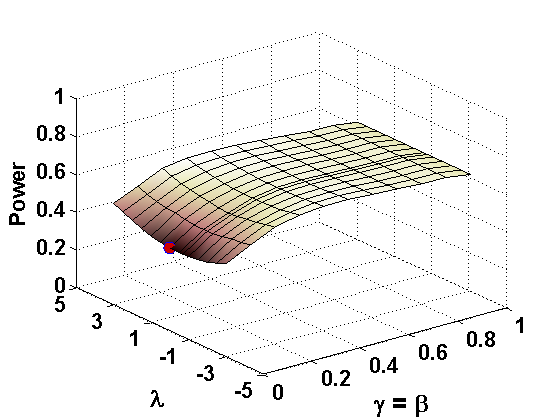}
\label{fig:power_cont_30_m2_1}}
\\ 
\subfloat[$(50, N(-1,1))$]{
\includegraphics[width=0.5\textwidth] {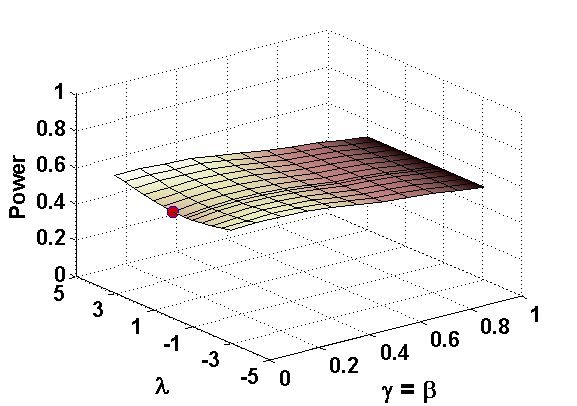}
\label{fig:power_cont_50_m1_1}}
~ 
\subfloat[$(50, N(-2,1))$]{
\includegraphics[width=0.5\textwidth] {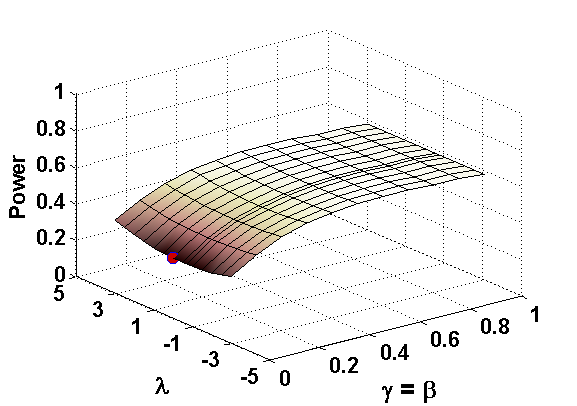}
\label{fig:power_cont_50_m2_1}}
\caption{Empirical power of the SDT at the contiguous alternatives with $\Delta=\sqrt{10}$ 
under $10\%$ contaminations for different combination of (sample size, contamination distribution).}
 \label{FIG:7contamination_power_contig_normal_mu}
\end{figure}

Thus, the proposed SDT with larger $\gamma=\beta$ gives us very useful alternatives to the LRT. 
They have more stable size and contiguous power than the latter test and 
also have satisfactory power against any fixed alternative under contamination. 
We will consolidate these empirical findings along with necessary theoretical results 
to suggest a suitable range of the tuning parameters $\lambda$ and $\gamma=\beta$
for practical usage in Section \ref{SEC:choice_tuning}.

\begin{figure}[!th]
	\centering
	\subfloat[$(30, N(-2,1))$]{
		\includegraphics[width=0.5\textwidth, height=0.27\textwidth] {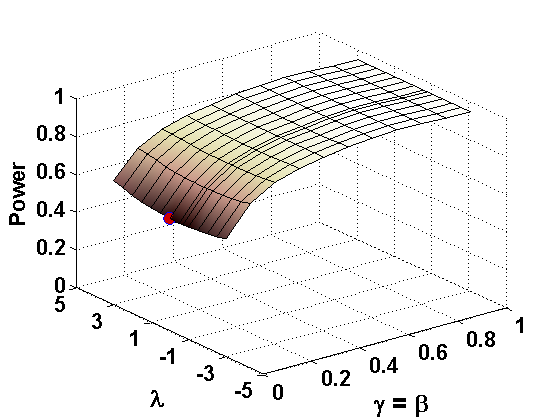}
		\label{fig:power_fixed_30_m2_2}}
	~ 
	\subfloat[$(50, N(-2,1))$]{
		\includegraphics[width=0.5\textwidth, height=0.27\textwidth] {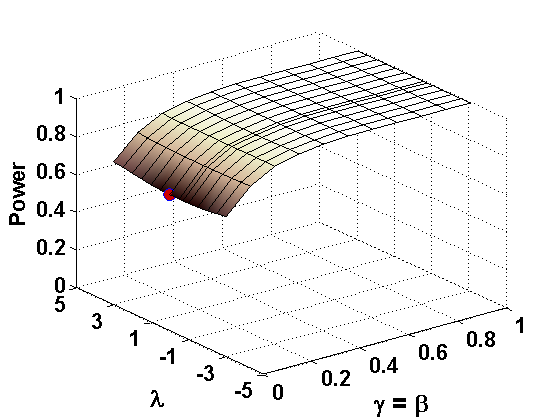}
		\label{fig:power_fixed_50_m2_2}}
	\\ 
	\subfloat[$(30, N(-3,1))$]{
		\includegraphics[width=0.5\textwidth, height=0.27\textwidth] {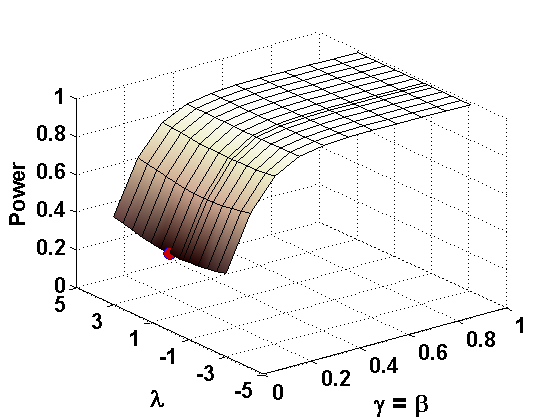}
		\label{fig:power_fixed_30_m3_2}}
	~ 
	\subfloat[$(50, N(-3,1))$]{
		\includegraphics[width=0.5\textwidth, height=0.27\textwidth] {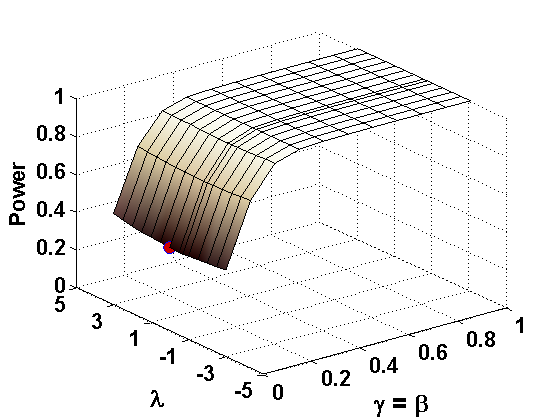}
		\label{fig:power_fixed_50_m3_2}}
	\\ 
	\subfloat[$(30, N(-4,1))$]{
		\includegraphics[width=0.5\textwidth, height=0.27\textwidth] {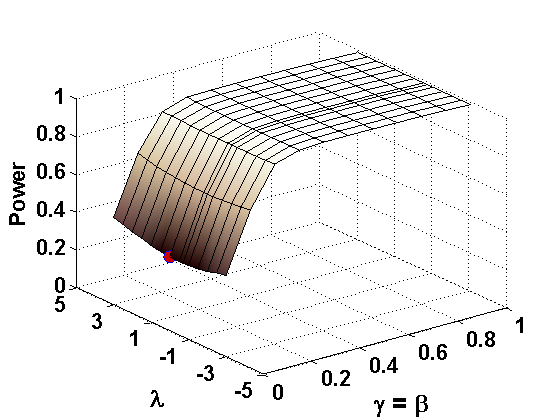}
		\label{fig:power_fixed_30_m4_2}}
	~
	\subfloat[$(50, N(-4,1))$]{
		\includegraphics[width=0.5\textwidth, height=0.27\textwidth] {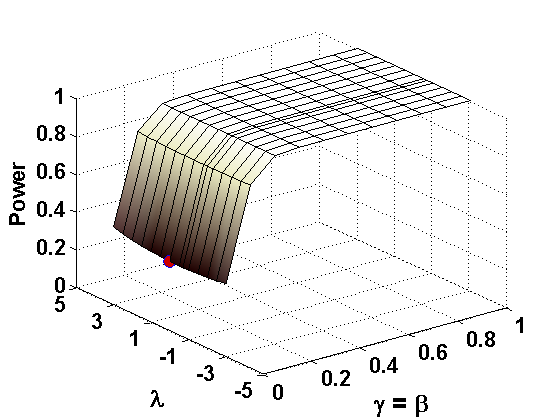}
		\label{fig:power_fixed_50_m4_2}}
	\caption{Empirical power of the SDT against the fixed alternative  $\theta = 1$ 
		under (heavy) $20\%$ contaminations for different combination of (sample size, contamination distribution).}
	\label{FIG:7contamination_power_exact_normal_mu}
\end{figure}

\begin{table}[h]
\centering 
\caption{Values of $c(g_\epsilon)/c(f_{\theta_0})$ for different $\epsilon$ and $\sigma$ under normal model with $\theta_0=0$ }
\begin{tabular}{ll| r r r r r r }  \hline
& & \multicolumn{6}{c}{$\beta=\gamma$}	\\\hline
$\sigma$ & $\epsilon$  & 0		& 0.1		& 0.3		& 0.5		& 0.7		& 1	\\	\hline\hline
0.5 &	0.0005	&	1.0315	&	1.0105	&	1.0029	&	1.0013	&	1.0007	&	1.0006	\\
&	0.001	&	1.0629	&	1.0211	&	1.0057	&	1.0025	&	1.0014	&	1.0010	\\
&	0.005	&	1.3134	&	1.1082	&	1.0290	&	1.0126	&	1.0073	&	1.0054	\\
&	0.01	&	1.6236	&	1.2231	&	1.0593	&	1.0256	&	1.0147	&	1.0107	\\
&	0.02	&	2.2344	&	1.4752	&	1.1242	&	1.0524	&	1.0297	&	1.0216	\\
&	0.05	&	3.9900	&	2.4609	&	1.3592	&	1.1411	&	1.0777	&	1.0560	\\
&	0.1	&	6.6600	&	5.4663	&	1.9592	&	1.3223	&	1.1679	&	1.1186	\\
\hline
1 & 0.0005	&	1.0075	&	1.0030	&	1.0011	&	1.0006	&	1.0006	&	1.0005	\\
& 0.001	&	1.0150	&	1.0059	&	1.0022	&	1.0015	&	1.0011	&	1.0012	\\
& 0.005	&	1.0746	&	1.0296	&	1.0115	&	1.0075	&	1.0058	&	1.0053	\\
& 0.01	&	1.1484	&	1.0597	&	1.0232	&	1.0151	&	1.0116	&	1.0104	\\
& 0.02	&	1.2936	&	1.1213	&	1.0476	&	1.0305	&	1.0238	&	1.0210	\\
& 0.05	&	1.7100	&	1.3190	&	1.1265	&	1.0799	&	1.0619	&	1.0543	\\
& 0.1		&	2.3400	&	1.6984	&	1.2821	&	1.1734	&	1.1320	&	1.1147 \\\hline
2&	0.0005	&	1.0015	&	1.0007	&	1.0005	&	1.0007	&	1.0006	&	1.0005	\\
&	0.001	&	1.0030	&	1.0015	&	1.0012	&	1.0011	&	1.0009	&	1.0009	\\
&	0.005	&	1.0149	&	1.0078	&	1.0058	&	1.0055	&	1.0052	&	1.0051	\\
&	0.01	&	1.0296	&	1.0156	&	1.0118	&	1.0113	&	1.0107	&	1.0101	\\
&	0.02	&	1.0584	&	1.0312	&	1.0239	&	1.0225	&	1.0213	&	1.0203	\\
&	0.05	&	1.1400	&	1.0786	&	1.0617	&	1.0584	&	1.0552	&	1.0529	\\
&	0.1	&	1.2600	&	1.1591	&	1.1307	&	1.1239	&	1.1171	&	1.1123	\\
\hline
\end{tabular}
\label{TAB:7chi-sq_infl}
\end{table}

Finally, we complete this section with an illustration for the practical implication of 
our third robustness measure, the chi-square inflation factor 
(derived in Subsection \ref{SEC:Chi-square-inflation}), studied here under the normal model. 
As the null mean is $\theta_0 =0$ and the variance $\sigma^2$ is known, 
we can easily compute the values of the the chi-square inflation factor $c(g_\epsilon)$ 
under the contaminated model $g_\epsilon$ as defined in Subsection \ref{SEC:Chi-square-inflation}.
However, note that the chi-square inflation factor of the SDT is independent of 
the parameter $\lambda$ as shown in Theorem \ref{THM:7Chi_infl_fact_slope}.
Table \ref{TAB:7chi-sq_infl} presents the values of $c(g_\epsilon)/c(f_{\theta_0})$ 
for different  contamination proportions $\epsilon$ at the point $y=4$
for various values of $\sigma$ and $\beta=\gamma$.
As mentioned earlier, larger the value of $c(g_\epsilon)/c(f_{\theta_0})$ relative to $1$, 
more liberal will the confidence interval based on the SDT be; $c(g_\epsilon)/c(f_{\theta_0})=1$ 
generates the confidence interval with the correct level of significance. 
Therefore, it follows from the Table \ref{TAB:7chi-sq_infl} that the SDT with $\beta=\gamma=0$ 
is highly non-robust producing extremely liberal confidence intervals even at small contamination 
proportions like $\epsilon=0.01$. However, the SDTs with larger $\gamma=\beta$ 
remain stable even under higher contaminations like $\epsilon=0.05$ or $0.1$.
The chi-square inflation factor also depends on the parameters $y$ and $\sigma$;
note that $\theta_0$ is fixed by the null hypothesis. 
Also, it is clear from the theory discussed in Section \ref{SEC:Chi-square-inflation}
that the effect of the contamination increases as its mass moves away from the 
true distribution, i.e., as the values of $y$ increases in magnitude for the present case.
To examine the effect of $\sigma$, the results for 3 particular values of $\sigma$ are 
presented  in Table \ref{TAB:7chi-sq_infl}. 
Clearly, for any fixed contamination proportion $\epsilon$ at the point  $y$, 
the SDT generates more accurate confidence interval as the value of $\sigma$ increases.
This fact is quite intuitive because the effective distance between the contamination distribution 
and the true distribution decreases as $\sigma$ increases and 
hence the effect of contamination reduces.

Further, for the normal model the slope of the chi-square inflation factor 
at the infinitesimal contamination can be computed easily using 
Theorem \ref{THM:7Chi_infl_fact_slope} and has the simplified form given by 
$$
\frac{\partial}{\partial \epsilon}c(g_\epsilon) \big|_{\epsilon=0} 
= \frac{\kappa_\gamma (1+\beta)^2\sqrt{1+2\beta}}{\kappa_\beta^2 \kappa_{2\beta}} 
(y-\theta_0) e^{-\frac{\beta(y-\theta_0)^2}{\sigma^2}}.
$$
Again, it is clear from the above expression that 
the slope of the chi-square inflation factor remains bounded 
with respect to the contamination point for all $\beta>0$ 
implying the stability of the corresponding SDT.
On the other hand, at $\beta=0$ it becomes unbounded in $y$ 
which further illustrates the non-robust nature of the corresponding SDT as observed above.

\section{On the Choice of the Tuning Parameters}
\label{SEC:choice_tuning}

In previous sections, we have illustrated the performance of the proposed SDT both theoretically and empirically
for different values of the tuning parameters $\beta = \gamma$ and $\lambda$. 
So, some guidance about the actual choice of the divergence within 
the large class of available members of the $S$-divergence family are necessary here.

The theoretical robustness of the proposed SDT is seen to depend solely 
on the MDPDE used through the parameter $\beta$;
as $\beta$ gets larger, the robustness increases. However, the empirical power and size 
under contamination are seen to depend on $\lambda$
at smaller values of $\gamma=\beta\leq 0.2$ though their actual values are unstable 
for such smaller values of $\gamma=\beta$.
On the other hand, for larger values of $\gamma=\beta\geq 0.3$, both the empirical power and size  are quite stable 
and almost independent of $\lambda$; in this region the powers are found to be satisfactory high values and 
sizes are quite close to their nominal levels. However, a large value of $\beta=\gamma$ leads to a loss in 
the power of test under pure data as we have seen in Table \ref{TAB:7cont_power_norm_mu}; 
but the loss is not very significant even at $\beta=\gamma=0.5$.  
Besides the SDT does not necessarily have satisfactory sizes for all values of $\lambda$ --
the observed sizes are generally close to nominal ones only for $\lambda$ values near zero of the negative side.

Therefore, our empirical calculations, along with our theoretical findings,  
indicate that the preferable region of tuning parameter combination 
would approximately be the rectangle $\gamma=\beta \in [0.3, 0.5]$ and $\lambda \in [-0.5, 0]$. 
Empirically, we have observed that this is the preferable region in the sense that 
they maintain levels close to the nominal level and exhibit reasonably high powers. 
Tests that cannot maintain the nominal level are usually of little practical value 
as one does not know whether a higher observed power is an actual phenomenon 
or the consequence of an unstable size.
So, we do not advocate the use of such tests which have highly variable sizes in small sample.

\section{Possible Extension: Composite Hypothesis Testing}
\label{SEC:composite}

Although we have focused only on simple hypothesis testing throughout the present paper,
it is worth noting that all the concepts and results derived for the simple null can be generalized 
for composite hypothesis testing.  
This would indicate a wider scope for the proposals in this paper and their significance. 
Consider the set-up presented in the previous sections and 
let $\Theta_0$ be a proper subset of the parameter space $\Theta$.
Then, a composite hypothesis is given by 
\begin{equation}
H_0 : \theta \in \Theta_0 ~~~ \mbox{against} ~~~~ H_1 : \theta \notin \Theta_0.
\label{EQ:7composite_hyp}
\end{equation}
Note that, whenever $\Theta=\{\theta_0\}$ contains only one element $\theta_0$ then the above composite hypothesis 
(\ref{EQ:7composite_hyp}) coincides with the simple null hypothesis (\ref{EQ:7simple_hyp}).

Using a similar idea as in Section \ref{SEC:7sample1_simpleTest}, 
we can also construct a general family of test statistics 
based on the $S$-divergence measures. Suppose that $\hat{\theta}_\beta$ denote the unrestricted MDPDE of $\theta$ 
and $\widetilde{\theta}_\beta$ denote the restricted MDPDE obtained by minimizing 
the DPD with tuning parameter $\beta$ 
between the data and the model over the restricted subspace $\Theta_0$. 
The asymptotic distribution and influence functions of the restricted MDPDE 
can be found in \cite{Basu/etc:2013b} and \cite{Ghosh:2014}. 
Therefore, the general $S$-divergence based test for testing 
the composite hypothesis  (\ref{EQ:7composite_hyp}) is given  by
\begin{eqnarray}
\widetilde{\xi_n^{\gamma,\lambda}}({\hat{\theta}_\beta}, {\widetilde{\theta}_\beta}) 
= 2 n S_{(\gamma,\lambda)}(f_{\hat{\theta}_\beta}, f_{\widetilde{\theta}_\beta}).
\label{EQ:SDT_composite}
\end{eqnarray}
Note that if $\Theta_0=\{\theta_0\}$ then $\widetilde{\theta}_\beta=\theta_0$ for each $\beta$ and 
this general test statistic (\ref{EQ:SDT_composite}) coincides with our initial test statistics .

All the results including the asymptotic distributions and  robustness properties 
can easily be extended to the class of test statistics (\ref{EQ:SDT_composite}); 
the advantages are similar to the case of the simple null except for a few minor changes 
in the numerical values and constants due to the restrictions imposed by the null hypothesis. 
However, considering the length of the present paper, we have decided 
to consider the details of this extension in a future paper.

\section{Concluding Remarks}
\label{SEC:conclusion}

The excellent robustness properties of the tests of parametric hypotheses proposed 
by \cite{Basu/etc:2013a} have already been empirically observed and heuristically argued for.
However, the literature remains incomplete unless the theoretical robustness properties 
of these tests are properly established.
These properties have been carefully assembled in this paper and 
the different theoretical properties of this testing procedure are now established. 
The results theoretically conform the empirical observations of \cite{Basu/etc:2013a} and 
make the credentials of the density power divergence test more complete.

\appendix
\section{Conditions for Asymptotic Derivations}\label{App:conditions}

We assume that the true density $g$ belongs to the model family with true parameter value $\theta_0$,
i.e., $g=f_{\theta_0}$ and state the necessary conditions under this set-up.

\bigskip
\noindent
\textbf{Lehman Conditions \cite[][p.~429]{Lehmann:1983}:}

\begin{itemize}
	\item[(A)] There is an open subset of $\omega$ of the parameter space 	$\Theta$, 
	containing the true parameter value $\theta_0$ such that for almost all $x \in {\cal X}$,
	and all $\theta \in \omega$, the density $f_\theta(x)$ is three times differentiable with respect to $\theta$.

	\item[(B)] The first and second logarithmic derivatives of $f_\theta$ satisfy the equations 
	$$
	E_\theta\left[\nabla \log f_\theta(X)\right] = 0,
	$$
	and
	$$
	I(\theta) = E_\theta\left[(\nabla \log f_\theta(X))(\nabla \log f_\theta(X))^T\right]
	= E_\theta\left[-\nabla^2 \log f_\theta(X)\right].
	$$
	
	\item[(C)] The matrix $I(\theta)$ is positive definite with  all entries finite for all $\theta \in \omega$,
	and hence the components  $(\nabla \log f_\theta(X))$ are affinely independent with probability one. 
	
	\item[(D)] For all $j, k, l$, there exists functions $M_{jkl}$ with finite expectation 
	(under true distribution) such that 
	$$
	\left|\nabla_{jkl} \log f_\theta(x)\right| \leq M_{jkl}(x),~~~~~~~\mbox{ for all}~~\theta \in \omega.
	$$ 
\end{itemize}

\noindent
The above conditions are standard for establishing the asymptotic distributions of estimators 
in many parametric situations, and the relevance of these conditions are well known.

\bigskip\bigskip
\noindent
\textbf{Basu et al.~Conditions \cite[][p.~304]{Basu/etc:2011} at the model:}

\begin{itemize}
\item[(D1)] The support of the distribution function $F_\theta$, i.e.,  
the set ${\cal X} = \{x|f_\theta(x) > 0\}$ is independent of $\theta$. 
	
\item[(D2)] There is an open subset of $\omega$ of the parameter space 	$\Theta$, 
containing the true parameter value $\theta_0$ such that for almost all $x \in {\cal X}$,
and all $\theta \in \omega$, the density 	$f_\theta(x)$ is three times differentiable with respect to $\theta$
and the third partial derivatives are continuous with respect to	$\theta$.
	
\item[(D3)] The integral $\int f_\theta^{1+\beta}(x) dx$   
can be differentiated three times with respect to $\theta$, and the 
derivatives can be taken under the integral sign.
	
\item[(D4)] The ${p \times p}$ matrix $J_\beta(\theta)$, 
defined in Section \ref{SEC:7sample1_simpleTest},  is positive definite.
	
\item[(D5)] There exist functions $M_{jkl}(x)$ with finite expectation under the true distribution such that
$$
\left|\nabla_{jkl} d_{\beta}(\delta_x,f_\theta) \right| \leq M_{jkl}(x)
~{\rm for~all}~\theta \in \omega,~{\rm for~all}~j,k,l,
$$ 
where $\delta_x$ is the density function of the degenerate distribution at $x$.
\end{itemize}

\noindent
The first two conditions (D1)--(D2) relate to our parametric assumptions, 
which are routine in asymptotic derivations and are satisfied by most parametric models. 
The last three conditions depend both on the structure of the density power divergence 
as well as the value of $\beta$ and are necessary to establish the required asymptotics. 
It is not difficult to verify that these conditions are satisfied by common parametric models 
like the normal, exponential, Poisson, geometric etc. for all $\beta \geq 0$.
%

\bigskip\bigskip
\noindent\textbf{Acknowledgement:}\\
This research is partially supported by Grant MTM 2012-331-40. 
The authors also gratefully acknowledge the comments of a referee 
which led to an improved version of the manuscript.


\end{document}